\pgfplotsset{compat=1.18} 
\definecolor{codegray}{rgb}{0.5,0.5,0.5}
\lstdefinestyle{mystyle}{
    commentstyle=\color{codegreen},
    keywordstyle=\color{magenta},
    numberstyle=\tiny\color{codegray},
    stringstyle=\color{codepurple},
    basicstyle=\ttfamily\footnotesize,
    breakatwhitespace=false,         
    breaklines=true,                 
    captionpos=b,                    
    keepspaces=true,                 
    numbers=left,                    
    numbersep=5pt,                  
    showspaces=false,                
    showstringspaces=false,
    showtabs=false,                  
    tabsize=2
}
  \lstdefinelanguage{GAP}{
    basicstyle=\ttfamily,
    keywords={true, false, function, return, fail, if, in, while, do, od, else, elif, fi, break, continue},
    keywordstyle=\color{blue}\bfseries,
    otherkeywords={
      >, <, ==
    },
    breaklines=true,      
    identifierstyle=\color{black},
    sensitive=True,
    comment=[l]{\#},
    commentstyle=\color{cyan},
    stringstyle=\color{red},
    morestring=[b]',
    morestring=[b]"
  }
\providecommand{\U}[1]{\protect\rule{.1in}{.1in}}
\newcolumntype{Y}{>{\raggedleft\arraybackslash}X}
\def\bc{{\mathbb{C}}}
\def\bn{{\mathbb{N}}}
\def\br{{\mathbb{R}}}
\def\bz{{\mathbb{Z}}}
\def\bt{{\mathbb{T}}}
\def\br{\mathbb R}
\def\wt{\widetilde}
\def\vs{\vskip.3cm}
\def\noi{\noindent}
\def\wt{\widetilde}
\def\t2deg{\mathbb T^2\text{\rm -deg}}
\def\tndeg{\mathbb T^n\text{\rm -deg}}
\def\s1deg{S^1\text{\rm -deg}}
\def\Om{\Omega}
\def\sign{\text{\rm sign\,}}
\def\ker{\text{\rm Ker\,}}
\DeclareMathOperator{\id}{Id}
  \definecolor{mygreen}{rgb}{0,.66,.05}
  \definecolor{lightyellow}{rgb}{1,1,.80}
\newtheorem{theorem}{Theorem}[section]
\newtheorem{lemma}{Lemma}[section]
\newtheorem{definition}{Definition}[section]
\newtheorem{remark}{Remark}[section]
\newtheorem{remark-definition}{Remark and Definition}[section]
\newtheorem{rem-not}{Remark and Notation}[section]
\begin{document}

\title[Global Bifurcation of Spiral Wave Solutions]{Global Bifurcation of Spiral Wave Solutions to the Complex Ginzburg-Landau Equation}

\author{Carlos Garcia-Azpeitia}\address{Departamento de Matem\'aticas y Mec\'anica, IIMAS-UNAM, Apdo. Postal 20-126, Col. San \'Angel,
Mexico City, 01000,  Mexico}
\email{cgazpe@mym.iimas.unam.mx}

\author{ Ziad Ghanem}\address{Department of Mathematical Sciences, University of Texas at Dallas, Richardson, TX 75080, USA}
\email{ziad.ghanem@UTDallas.edu}

\author{Wieslaw Krawcewicz}\address{Department of Mathematical Sciences, University of Texas at Dallas, Richardson, TX 75080, USA}
\email{wieslaw.@UTDallas.edu}

\date{}

\maketitle

\begin{abstract}
We use the $\mathbb T^2$-equivariant degree to establish the existence of unbounded branches of rotating spiral wave solutions with any prescribed number of arms for the complex Ginzburg Landau equation (GLe) on the planar unit disc. By leveraging spatial symmetries inherent to the problem, our approach avoids the restrictive assumptions required in previous studies \cite{Dai} that utilized the classical Leray-Schauder degree. Our results provide rigorous mathematical justification for the formation and persistence of these fundamental patterns, which are ubiquitous in physical, chemical, and biological systems but have, until now, eluded formal proof under general conditions.
\end{abstract}

\noi \textbf{Mathematics Subject Classification:} Primary: 37G40, 35B06; Secondary: 47H11, 35J57,  35J91

\medskip

\noi \textbf{Key Words and Phrases:} reaction-diffusion equation, symmetric bifurcation, unbounded branches,  spiral-wave solutions, equivariant degree.

\section{Introduction} \label{sec:introduction}
The tendency for spatially extended reaction-diffusion systems to exhibit spiral wave patterns has a long history of experimental verification with landmark examples including Belousov's 1951 observation of rotating and spiral formations in chemical cocktails of bromate, cerium, and acid (cf. \cite{Belousov}), Gerisch et al.'s 1974 discovery of spiral and circular growth patterns in slime mold cultures due to chemotaxis (cf. \cite{Gerisch}), and Allessie et al.'s 1977 demonstration of induced spiraling in the electrified atrial tissue of rabbits (cf. \cite{Allessie}). 
Originally introduced in the context of condensed-matter physics by the Russian physicists Vitaly Ginzburg and Lev Landau in the 1950s, the Ginzburg-Landau equation (GLe) serves as a unifying
phenomenological model of the nonlinear dynamics and pattern formation exhibited by many natural systems near critical points, such as phase transitions, superconductivity and superfluidity. For example, the evolution of a complex-valued wave function $\psi(t,r,\theta)$ associated with a superconducting condensate confined to the planar unit disc $D:= \{ z \in \bc : |z | < 1 \}$ and subjected to near absolute zero temperature can be modeled by the one-parameter $\alpha \in \br$ family of complex GLes:
\begin{align} \label{eq:system_GL}
\begin{cases}
\partial_t \psi = -(1+i \eta) \Delta \psi +  \alpha \psi + f(\psi), \quad \psi(t,r,\theta) \in \bc,  \\
\frac{\partial \psi}{\partial n}|_{\partial D} \equiv 0,
\end{cases}
\end{align}
where the bifurcation parameter $\alpha \in \br$ represents the growth (when positive) or decay (when negative) of the amplitude $|\psi|$, often associated with the proximity of the system to a phase transition, $\eta \in \br$ is a fixed diffusion parameter and $f:\bc \rightarrow \bc$ is a continuous function (typically, $f$ is chosen to be the cubic map $|u|^2u$) satisfying the conditions:
\begin{enumerate}[label=($A_\arabic*$)]
\item\label{a1} $f(e^{i \varphi} \psi) = e^{i \varphi} f(\psi)$ for all $\psi \in \bc$ and $\varphi \in S^1$; 
\item\label{a2} $f(\psi)$ is $o(|\psi|)$ as $\psi$ approaches $0$, i.e.
\[
    \lim_{\psi \rightarrow 0} \frac{f(\psi)}{|\psi|} = 0;
\]
\item\label{a3} there exist numbers $a,b >0$ and $c \in (0,1)$ such that
\[
|f(\psi)| < a|\psi|^c + b, \quad \psi \in \bc.
\]
\end{enumerate}
Relative equilibria of the GLe with a fixed temporal frequency $\beta \in \br$ and non-zero rotational frequency $\omega \in \br \setminus \{0\}$ are  solutions to \eqref{eq:system_GL} of the form $\psi(t,r,\theta) = e^{-i\beta t}u(r,\theta+ \omega t)$, which transform the boundary value problem \eqref{eq:system_GL} into the two-parameter $(\alpha,\beta) \in \br \times \br$ bifurcation problem 
\begin{align} \label{eq:system}
\begin{cases}
(1+i\eta)\Delta u + \omega \partial_\theta u = (\alpha + i \beta)u + f(u), \quad u(r,\theta) \in \bc,  \\
\frac{\partial u}{\partial n}|_{\partial D} = 0.  
\end{cases}
\end{align}
Condition \ref{a2} 
ensures the well-behavedness of our equation \eqref{eq:system}
at the origin in $\bc$, condition \ref{a3} is necessary to guarantee {\it a priori} bounds on its solutions and \ref{a1} implies that \eqref{eq:system} admits the symmetries of the Torus group
\[
\bt^2 := S^1 \times S^1.
\]
Special attention must be paid to the symmetry groups of the GLes \eqref{eq:system_GL} and \eqref{eq:system} --- arising from the domain of interest and the constraints placed on the nonlinear interaction term --- which regulate the symmetric properties of their possible solutions. Indeed, using a center bundle reduction framework, Golubitsky et al. examine in \cite{Golubitsky} how problems with {\it more than} $S^1$-symmetries (e.g. possessing a symmetry group of the form $S^1 \times \Gamma$ for some compact Lie group $\Gamma$) are able to manifest bifurcations from relative equilibria into quasiperiodic wave solutions. Under condition \ref{a1}, the complex GLe \eqref{eq:system_GL} admits the symmetry group $\bt^3 := O(2) \times S^1 \times S^1$ with $O(2)$ representing the symmetries of the disc and the two copies of $S^1$ representing temporal and rotational symmetries, respectively. After substituting our relative equilibria ansatz, we are left with the two-torus symmetries of \eqref{eq:system} and solutions of the form
\begin{align} \label{def:spiral_wave}
 u(r,\theta) = e^{im \theta} v(r), \quad m \in \bz, \; v(r) \in \bc,
\end{align}
which are called {\it $|m|$-armed spiral waves} if $m \neq 0$ and {\it target waves}, otherwise. Moreover, the {\it orientation} of a spiral wave solution \eqref{def:spiral_wave}
is said to be {\it clockwise} if $m > 0$ and {\it counterclockwise}, otherwise. 
\begin{remark}
The wave front of a solution \eqref{def:spiral_wave} is determined by its radial profile $v(r)$. While general conditions on the nonlinearity can lead to complex-valued $v(r)$ resulting in visually apparent spiral structures (e.g., resembling the Archimedean spiral shapes illustrated in Figure \ref{fig:1}, wherein solutions of the form $u(r,\theta) = e^{im\theta}e^{ikr}$ are plotted for various $(m,k) \in \bz \times \bz$), the radial part can also be real-valued or have zero phase gradient, corresponding to non-twisting patterns. Our analysis encompasses all possible spiral-like solutions of the form \eqref{def:spiral_wave}.
\end{remark}
\vs
\begin{figure}
    \centering
\includegraphics[width=1\linewidth]{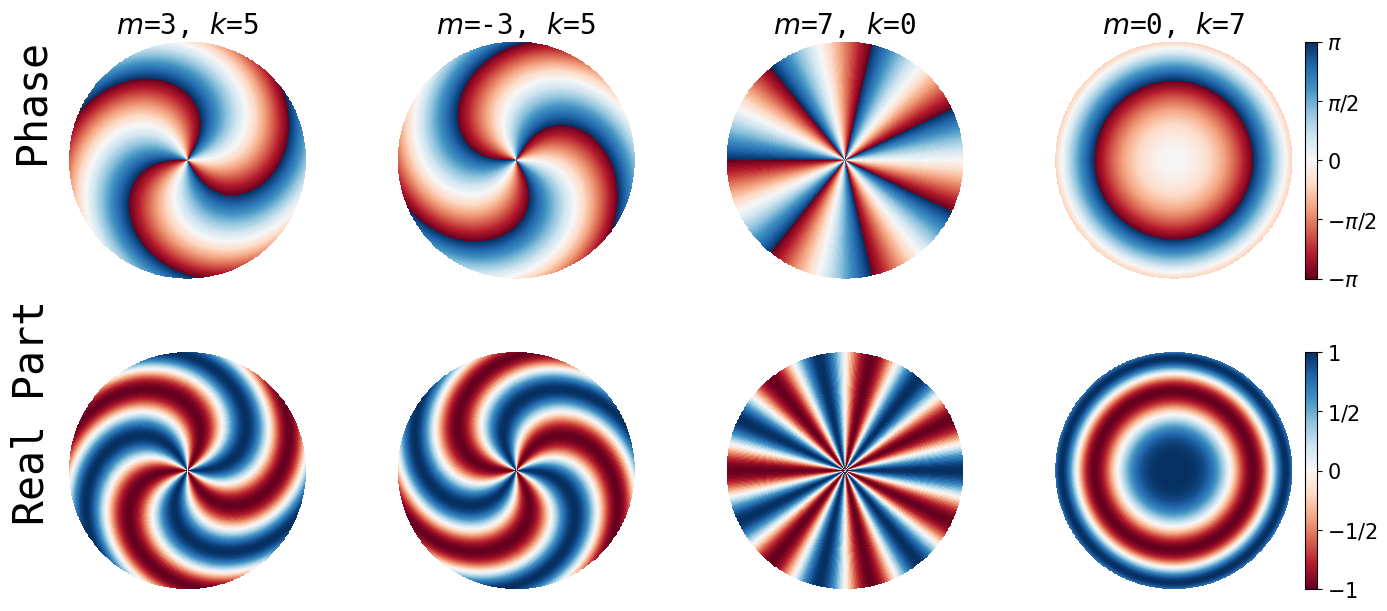}
    \caption{Phase and real parts of the function $u(r,\theta) = e^{im\theta}e^{ikr}$ for various choices of $(m,k) \in \bz \times \bz$.}
    \label{fig:1}
\end{figure} 
For a comprehensive review of the complex GLe
in the first three dimensions with a particular focus on spiral wave behavior from a condensed-matter perspective, we refer the reader to \cite{Aranson}. In one of the many theoretical frameworks synthesized by Aranson et al. in this review, the stability of two-dimensional spiral wave solutions to a GLe defined on a bounded domain is inferred from the stability of the one-dimensional planar waves they emit in an unbounded one-dimensional medium. Complementing this theoretical perspective, Guzmán-Velázquez et al. use finite element simulations in \cite{Guzman} to numerically investigate 
the validity of these analytical predictions for isolated spiral solutions on a bounded circular domain, providing crucial evidence that stability in this setting differs qualitatively from the stability analytically suggested by the associated planar waves. For instance, they show that the 
Eckhaus criterion, a cornerstone for predicting one-dimensional wave persistence, is not a valid predictor for the stability of two-dimensional spiral waves in a finite domain. Taken together, these works reveal an apparent mismatch between the theory for unbounded systems and the observed dynamics in finite domains, highlighting the need for the rigorous analytical results obtained in this paper.
\vs
The existence of branches of spiral wave solutions with fixed temporal frequency $\beta \in \br$, positive orientation and any finite number of arms emerging from the trivial solution to a class of one-parameter, stationary GLes of the form 
\begin{align*} 
\frac{1}{\lambda}(1+i \eta) \Delta u = u - |u|^2u - ib|u|^2u - i \beta u, \quad b \in \br,
\end{align*}
at an infinite sequence of positive critical bifurcation parameter values $0 < \lambda_0 < \lambda_1 < \cdots < \lambda_k < \cdots$ was proved by Dai in \cite{Dai} using a combination of perturbation and shooting arguments. Moreover, in the sequel article \cite{DaiLap}, Dai et al. are able to $(\rm i)$ describe global bifurcation diagrams, $(\rm ii)$ prove the persistence of branches under parameter perturbation and $(\rm iii)$ explicitly construct the global attractor for the branch of spiral wave solutions emerging from any critical point $(\lambda_k,0)$ by assuming sufficiently small values of $\eta, b \in \br$.
\vs
The structural constraints on the nonlinearity and the requirement that $|b|, |\eta| \ll 1$ arise from Dai et al.'s reliance on Rabinowitz-type arguments---based on the classical Leray-Schauder degree---which underpin all of global bifurcation theory. Essentially, the difficulty lies in the fact that the eigenspaces of the linearized GLe  operator relevant to spiral wave solutions are inherently complex. Standard topological degree arguments depend on detecting a change in the sign of an index (e.g. the Leray-Schauder degree) as bifurcation parameters cross critical points, typically guaranteed when eigenvalues cross zero with odd multiplicity. This guarantee invariably fails for eigenvalues with even-dimensional eigenspaces, complicating the global tracking of solution branches.
To circumvent this problem, Dai et al. consider a continuation from the real stationary Ginzburg-Landau equation (corresponding to the case $b = \eta = 0$) where the classical Leray-Schauder degree is a more appropriate tool. 
\vs
In contrast, this paper utilizes the equivariant degree, a topological tool specifically designed for symmetric bifurcation problems.
Our approach is particularly well-suited for studying  \eqref{eq:system} because: $(\rm i)$ the problem possesses inherent $\mathbb{T}^2$ symmetry; $(\rm ii)$ the irreducible representations of the abelian group $\mathbb{T}^2$ are one-dimensional complex spaces, simplifying aspects of the degree computation; and $(\rm iii)$ the values of the $\mathbb{T}^2$-equivariant degree are classified by the isotropy subgroups of $\mathbb{T}^2$ with one-dimensional Weyl groups, which correspond directly to the symmetries of the spiral and target wave solutions sought. In this way, the $\bt^2$-equivariant degree is able to detect and track bifurcating branches of solutions with specific symmetries, even when the Leray-Schauder degree provides insufficient information. Our primary goal is to leverage this tool to establish the existence of unbounded branches of these spiral-like solutions under the general conditions \ref{a1}-\ref{a3}.
\vs
The remainder of this paper is organized as follows: in Section \ref{sec:reformulation} we reformulate our problem in a suitable functional space $\mathscr H$ as the fixed point equation of a nonlinear operator $\mathscr F:\mathscr H \rightarrow \mathscr H$ in the form of a $\bt^2$-equivariant compact perturbation of identity and
in Section \ref{sec:local_global_bif}, we recall equivariant analogues of the classical Krasnosel'skii and Rabinowitz theorems and apply  equivariant degree theory methods to establish local and global bifurcation results for \eqref{eq:system}. 
\vs
The emergence of branches of non-trivial solutions from the trivial solution $(\alpha,\beta,0)$ is only possible at critical parameter values $(\alpha_0,\beta_0) \in \br \times \br$ for which the linearization of $\mathscr F$ around the origin in $\mathscr H$ becomes singular. For equation \eqref{eq:system}, these values are determined by the properties of Bessel functions related to the Neumann boundary conditions on the unit disk. Let $J_m(x)$ be the $m$-th Bessel function of the first kind and denote by $s_{m,n}$ the $n$-th non-negative zero of its derivative $J'_m(x)$. We prove in Section \ref{sec:G_isotypic_decomp} that these critical parameter values can be uniquely associated with an index pair $(m,n) \in \bz \times \bn$ (here $\bn := \{0,1,2,\ldots\})$ via the notation:
    \[
    \alpha_{m,n}:= -s_{|m|,n}, \quad \beta_{m,n} := m \omega - \eta s_{|m|,n}.
    \]
In Section \ref{sec:computation_local_bif_inv}, we formulate our main local bifurcation result, Theorem \ref{thm:main_local_bif}, establishing
that each critical point $(\alpha_{m,n},\beta_{m,n},0) \in \br \times \br \times \mathscr H$ is a branching point for a branch of spiral wave solutions with $|m|$ arms and 
in Section \ref{sec:rab_alt}, we prove our main global bifurcation result, Theorem \ref{thm:main_global_bif} (stated below),  guaranteeing the unboundedness of these branches:
\begin{theorem}\label{thm:main_global_bif}
For each fixed $m \in \bz$, the trivial solution to \eqref{eq:system} at each of the critical parameter values $(\alpha_{m,n},\beta_{m,n}) \in \br \times \br$ with $n \in \bn$
is a branching point for an unbounded branch of non-trivial solutions, consisting of $|m|$-armed spiral waves with orientation determined by the value of $\sign m$ if $m \neq 0$ and target waves, otherwise.
\end{theorem}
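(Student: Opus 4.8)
The plan is to obtain Theorem~\ref{thm:main_global_bif} from the local result (Theorem~\ref{thm:main_local_bif}) together with the equivariant version of the Rabinowitz alternative recalled in Section~\ref{sec:local_global_bif}, applied to the $\bt^2$-equivariant completely continuous field $\mathscr{F}$ constructed in Section~\ref{sec:reformulation}. Fix $m \in \bz$ and pass to the fixed-point subspace $\mathscr{H}^{H_m}$, where $H_m \leq \bt^2$ is the twisted one-parameter isotropy group $\{(\varphi,\phi) : \varphi + m\phi \equiv 0\}$ attached to the $e^{im\theta}$ mode. Since $\mathscr{H}^{H_m}$ consists exactly of functions $e^{im\theta}v(r)$, every nontrivial zero of $\mathscr{F}$ in $\br^2 \times \mathscr{H}^{H_m}$ is automatically a $|m|$-armed spiral wave of orientation $\sign m$ (a target wave when $m=0$); moreover the residual Weyl group $W(H_m) = \bt^2/H_m \cong S^1$ still acts on this space, and it is precisely this surviving circle symmetry---which the Leray--Schauder degree cannot see---that the $\bt^2$-degree exploits. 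By equivariance $\mathscr{F}$ restricts to a completely continuous field on $\br^2 \times \mathscr{H}^{H_m}$ whose only branching points on the trivial plane are the $(\alpha_{m,n},\beta_{m,n},0)$, $n \in \bn$, as identified in Section~\ref{sec:G_isotypic_decomp}.

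Next I would apply the equivariant Rabinowitz alternative to the connected component $\mathscr{C}_{m,n}$ of the closure of the nontrivial solution set that contains $(\alpha_{m,n},\beta_{m,n},0)$. This produces the dichotomy: either $\mathscr{C}_{m,n}$ is unbounded in $\br^2 \times \mathscr{H}^{H_m}$, or $\mathscr{C}_{m,n}$ is compact and contains a finite collection of branching points $(\alpha_{m,n_j},\beta_{m,n_j},0)$ across which the local bifurcation invariants cancel in the coordinate indexed by the orbit type $(H_m)$. Here the a priori bounds supplied by condition~\ref{a3} do double duty: the sublinear growth makes $\mathscr{K} := \id - \mathscr{F}$ completely continuous, and it keeps solutions bounded over bounded parameter ranges, so that for these components ``bounded'' and ``compact'' coincide and unboundedness necessarily means unboundedness in the parameter directions.

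It remains to exclude the compact alternative, and this is exactly the point at which the classical degree is blind and the equivariant refinement succeeds. By Theorem~\ref{thm:main_local_bif} the local bifurcation invariant at each $(\alpha_{m,n},\beta_{m,n},0)$ is nontrivial, and its $(H_m)$-component is a nonzero integer of one and the same sign for all $n$: the relevant eigenvalue of the linearization is simple inside the $(H_m)$-isotypic component and crosses the singular set transversally and with a fixed co-orientation as the parameter passes $-s_{|m|,n}$, the strict ordering $s_{|m|,0} < s_{|m|,1} < \cdots$ of the zeros of $J'_{|m|}$ ensuring that the crossings are distinct and all contribute with the same sign. Hence the $(H_m)$-coordinate of the total invariant over any nonempty finite subcollection of the $(\alpha_{m,n_j},\beta_{m,n_j},0)$ is nonzero, contradicting cancellation. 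Therefore $\mathscr{C}_{m,n}$ must be unbounded, and---lying in $\br^2 \times \mathscr{H}^{H_m}$---it consists, away from the branching point, of $|m|$-armed spiral waves of orientation $\sign m$ (target waves if $m=0$).

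I expect the sign-definiteness of the local invariants in the previous paragraph to be the main obstacle. Establishing that every crossing is co-oriented, so that the invariants add rather than cancel, requires the isotypic decomposition of the linearized operator together with the simplicity and monotonicity of the Bessel-derivative spectrum; it is here that the one-dimensionality of $W(H_m)$---which makes the $(H_m)$-coordinate of the $\bt^2$-degree a well-defined integer capable of detecting the even-dimensional complex eigenspaces---accomplishes what the Leray--Schauder degree cannot.
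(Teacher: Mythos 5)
Your proposal is correct and follows essentially the same route as the paper: Rabinowitz' alternative (Theorem~\ref{thm:rab_alt}) combined with the fact that every critical point $(\lambda_{m',n'},0)$ contributes $+1$ to the $(H_m)$-coefficient when $m'=m$ and $0$ otherwise, so the required cancellation is impossible; your passage to the fixed-point space $\mathscr H^{H_m}$ with its residual $S^1$-action is just a presentational variant of the paper's extraction of $\operatorname{coeff}^{H_m}$ in the full space. The sign-definiteness you single out as the main obstacle is exactly the paper's Lemma~\ref{lemm:local_bif_inv_comp_formula}: each $\mu_{m,n}$ is complex-affine in $\lambda=\alpha+i\beta$, so $D\mu_{m,n}$ is multiplication by a nonzero complex number, giving $\rho_{m,n}=+1$ and $\omega_{\bt^2}(\lambda_{m,n})=(H_m)$ identically (your side remark that unboundedness must occur in the parameter directions is an over-claim, since sublinear growth still permits blow-up of $\|u\|$ at critical parameter values, but nothing in the argument depends on it).
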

\section{Functional Space Reformulation of \eqref{eq:system}} \label{sec:reformulation}
In this section, we prepare the bifurcation problem \eqref{eq:system} for application of the $\bt^2$-equivariant degree with a two-parameter operator equation reformulation in a suitable functional space. Let's begin by considering the Sobolev space 
\begin{align}\label{def:space_H}
\mathscr H := \left\{ u \in H^2(D; \bc): \frac{\partial u}{\partial n}|_{\partial D} = 0 \right\},
\end{align}
equipped with the norm
\begin{align} \label{def:sobolev_norm}
\|u\|_{2}:=\max\{\|D^s u\|_{L^2}: |s|\le 2\},
\end{align}
where $s := (s_1,s_2)$, $|s|:=s_1+s_2 \leq 2$ and $D^s u :=\frac{\partial ^{|s|} u}{\partial r^{s_1} \partial \theta^{s_2}}$. 
Every function $u\in\mathscr H$ admits a complex Fourier expansion of the form
\begin{align}\label{def:fourier_expansion_H}
u(r,\theta) := \sum_{m \in \bz}  R_{m}(r)e^{im\theta}c_m, \quad c_m \in \bc.
\end{align}
The radial component corresponding to the $m$-th Fourier mode $R_m: \br \rightarrow \br$ can be decomposed into an orthogonal series of eigenfunctions $\{ u_{m,n} : \br \rightarrow \br\}_{n \in \bn}$ which are solutions to the eigenproblem
\begin{align}\label{eq:eigenproblem_operatorL}
\begin{cases}
    -\Delta u_{m,n}(r)T_m(\theta) = s_{m,n} u_{m,n}(r)T_m(\theta);   \\
    u_{m,n}'(0) = u_{m,n}'(1) = 0,
\end{cases}
\end{align}
where $T_m(\theta):= e^{im\theta}c_m$ and $\Delta u := (\partial_r^2 + \frac{1}{r}\partial_r + \frac{1}{r^2}\partial_\theta^2) u$. It is well known that \eqref{eq:eigenproblem_operatorL} reduces to the Bessel equation
\begin{align*}
    \begin{cases}
        r^2u^{''}_{m,n}(r) + r u^{'}_{m,n}(r) + \left( s_{m,n}r^2 - m^2 \right)u_{m,n}(r) = 0; \\
        u_{m,n}'(0) = u_{m,n}'(1) = 0,
    \end{cases}
\end{align*}
which has solutions of the form: 
\[
u_{m,n}(r) := J_{|m|}\left( \sqrt{s_{|m|,n}} r \right), \quad (m,n) \in \bz \times \bn,
\]
where $J_{m}: \br \rightarrow \br$ is the $m$-th Bessel function of the first kind. 
From here, the boundary conditions $u_{m,n}'(0) = u_{m,n}'(1) = 0$
imply that eigenvalue $s_{|m|,n}$ associated with the index pair $(m,n) \in \bz \times \bn$ must be the $n$-th non-negative zero of the derivative $J'_{|m|}(x)$. The special case $(m,n)=(0,0)$ corresponds to the zero eigenvalue $s_{0,0} = 0$ with the constant eigenfunction $u_{0,0}(r)\equiv1$. Consequently, the Fourier expansion \eqref{def:fourier_expansion_H} can be further decomposed as follows
\begin{align}\label{def:full_fourier_expansion_H}
u(r,\theta) := \sum_{(m,n) \in \bz  \times \bn} J_{|m|}\left(\sqrt{s_{|m|,n}}r\right) 
e^{im \theta}c_{m,n}, \quad c_{m,n} \in \bc.
\end{align}
Choosing $q > \max\{1,2 c\}$ (cf. Assumption \ref{a3}), we also consider the Nemytski operator
\[
N_f: L^q(D;\bc) \rightarrow L^2(D;\bc), \quad N_f(u)(r, \theta ) := f(u(r, \theta )),
\]
the Banach embeddings
\begin{align*}
j_1: H^1(D;\bc) \hookrightarrow L^2(D;\bc), \quad j_2: \mathscr H \hookrightarrow L^q(D;\bc),
\end{align*}
and the shifted Laplacian operator
\begin{align*}
    \mathscr L : \mathscr H \rightarrow L^2(D; \bc), \quad \mathscr L u := -\Delta u + u.
\end{align*}
Since $N_f$ is continuous, $j_1,j_2$ are compact and $\mathscr L$ is a linear isomorphism, the two-parameter family of operators $\mathscr F: \br \times \br \times \mathscr H \rightarrow \mathscr H$ given by
\[
\mathscr F(\alpha,\beta,u):= u -  \frac{1}{1+i\eta} \mathscr L^{-1}\big( j_1 \circ (1-\alpha + i(\eta- \beta) + \omega \partial_\theta)u -N_f \circ j_2(u) \big),
\]
is a compact perturbation of the identity for every parameter pair $(\alpha,\beta) \in \br \times \br$. Notice also that \eqref{eq:system} is equivalent to the operator equation
\begin{align}\label{eq:operator_equation}
    \mathscr F(\alpha,\beta,u) = 0,
\end{align}
in the sense that a function $u \in \mathscr H$ is a solution to \eqref{eq:system} for a particular parameter pair $(\alpha,\beta) \in \br \times \br$ if and only if $(\alpha,\beta,u) \in \br \times \br \times \mathscr H$ satisfies \eqref{eq:operator_equation}.
\begin{remark} \label{rm:constant_sol_bif} \rm
The bifurcation corresponding to the zero eigenvalue provides a simple but illuminating illustration of our general results. Substituting a constant solution $u_0 \in \bc$ into \eqref{eq:system}, our PDE becomes 
$0 = (\alpha+i\beta)u_0 + f(u_0)$. A bifurcation of constant solutions from the zero solution can only occur when the corresponding linearization, $(\alpha+i\beta)u_0=0$, has non-trivial solutions, corresponding exactly to the critical point $(\alpha_{0,0}, \beta_{0,0}) =(0,0)$.  The bifurcating branch emerging from the origin in $\br \times \br \times \mathscr H$ consists of constant solutions satisfying $f(u_0)=-(\alpha+i\beta)u_0$. By assumption \ref{a1}, if $u_0$ is a solution, so is $e^{i\varphi}u_0$ for any $\varphi \in S^1$. This branch of solutions has the isotropy $S^1 \times \{1\}$, which corresponds to the orbit type $(H_0)$ in our classification (see \eqref{def:subgroup_identification}), perfectly illustrating our theory in its simplest case. For a typical cubic nonlinearity like $f(u)=-|u|^2u$, this yields a circle of solutions with amplitude $|u_0|=\sqrt{\alpha}$ for $\alpha>0$ and $\beta=0$.
\end{remark}
\section{Local and Global Bifurcation in \eqref{eq:system}} \label{sec:local_global_bif}
In this section we assemble the framework of a $\bt^2$-equivariant degree approach for solving two-parameter symmetric bifurcation problems of the form \eqref{eq:operator_equation}. 
\vs
Notice that $\mathscr H$ is a natural Hilbert $\bt^2$-representation with respect to the isometric $\bt^2$-action given by
\begin{align}\label{def:isometric_G_action}
(e^{i \vartheta}, e^{i \varphi}) u(r, \theta ) := e^{i \varphi}u(r, \theta + \vartheta), \quad (e^{i \vartheta}, e^{i \varphi}) \in S^1 \times S^1.
\end{align}
Moreover, under assumptions \ref{a1}--\ref{a2}, the two-parameter family of operators $\mathscr F: \br \times \br \times \mathscr H \rightarrow \mathscr H$ $\rm (i)$ is $\bt^2$-equivariant with respect to the group action \eqref{def:isometric_G_action}, $\rm (ii)$ satisfies $\mathscr F(\alpha,\beta,0) = 0$ 
for all parameter pairs $(\alpha,\beta) \in \br \times \br$ and $\rm (iii)$ is differentiable at $0 \in \mathscr H$ with $\mathscr A(\alpha,\beta) := D \mathscr F(\alpha, \beta,0): \mathscr H \rightarrow \mathscr H$ given by
\begin{align} \label{def:operator_A}
\mathscr A(\alpha,\beta)u = u - \frac{1}{1+i\eta} \mathscr L^{-1}\big( (1-\alpha + i(\eta- \beta) + \omega \partial_\theta)u\big).
    \end{align}
The set of all solutions to the operator equation \eqref{eq:operator_equation} can be divided into the set of {\it trivial solutions:}
\[
M := \{ (\alpha,\beta,0) \in \br \times \br \times \mathscr H \},
\]
and the set of {\it non-trivial solutions:}
\[
\mathscr S := \{ (\alpha,\beta,u) \in \br \times \br \times \mathscr H: \mathscr  F(\alpha,\beta,u) = 0, \; u \neq 0 \}.
\]
Moreover, given any orbit type $(H) \in \Phi_1(\bt^2)$ (cf. Appendix \ref{sec:appendix_eqdeg} for a classification of the isotropy lattice $\Phi_1(\bt^2)$) we can always consider the $H$-fixed-point set 
\[
\mathscr S^H := \{ (\alpha,\beta,u) \in \mathscr S : G_u \geq H \},
\]
consisting of all non-trivial solutions to \eqref{eq:operator_equation} with {\it symmetries at least $(H)$}, i.e. $(\alpha,\beta,u) \in \mathscr S^H$ if and only if $\mathscr F(\alpha,\beta,u) = 0$, $u \in \mathscr H \setminus  \{0\}$ and
\[
h u(r, \theta ) = u(r, \theta ) \text{ for all } h \in H \text{ and } (r, \theta ) \in D.
\]
\subsection{The Local Bifurcation Invariant and Krasnosel'skii's Theorem}\label{sec:local-bif-inv}
For simplicity of notation, we identify $\br \times \br$ with the complex plane $\bc$ by associating each pair of parameters $(\alpha,\beta) \in \br \times \br$ with the complex number $\lambda := \alpha + i \beta$. In order to formulate a Krasnosel'skii-type local equivariant bifurcation result for the equation \eqref{eq:operator_equation}, it will be necessary to introduce a lexicon of bifurcation terminology, following \cite{book-new}.
\vs
First, we clarify what is meant by a bifurcation of the equation \eqref{eq:operator_equation}:
\begin{definition} \rm
    A trivial solution $(\lambda_0,0) \in M$ is said to be a {\it bifurcation point} for \eqref{eq:operator_equation} if every open neighborhood of the point $(\lambda_0,0)$ has a non-empty intersection with the set of non-trivial solutions $\mathscr S$.
\end{definition}
It is well-known that a necessary condition for a trivial solution $(\lambda,0) \in M$ to be a bifurcation point for the equation \eqref{eq:operator_equation} is that the linear operator $\mathscr A(\lambda): \mathscr H \rightarrow \mathscr H$ is {\it not} an isomorphism. This leads to the following definition:
\begin{definition} \rm
    A trivial solution $(\lambda_0,0) \in M$ is said to be a \textit{regular point} for \eqref{eq:operator_equation} if $\mathscr A(\lambda_0): \mathscr H \rightarrow \mathscr H$ is an isomorphism and a \textit{critical point} otherwise. We call the set of all critical points 
\begin{align} \label{def:critical_set}
    \Lambda := \{ (\lambda,0) \in \br^2 \times \mathscr H : \mathscr A(\lambda): \mathscr H \rightarrow \mathscr H \text{ is not an isomorphism}\},   
\end{align}
the {\it critical set} for \eqref{eq:operator_equation}. A critical point $(\lambda_0,0) \in \Lambda$ is said to be \textit{isolated} if there exists an $\varepsilon > 0$ neighborhood $B_{\varepsilon}(\lambda_0) := \{ (\lambda,0) \in \bc \times \mathscr H : \vert \lambda - \lambda_0 \vert < \varepsilon \}$ with
    \[ \overline{B_{\varepsilon}(\lambda_0)}\cap \Lambda = \{ (\lambda_0,0) \}.
    \]
\end{definition}
The next two definitions concern the continuation and symmetric properties of non-trivial solutions emerging from the critical set.
\begin{definition} \rm
A non-empty set $\mathcal C \subset \mathscr S$ is called a {\it branch} of non-trivial solutions to \eqref{eq:operator_equation} if there exists a connected component $\mathcal D$ of $\overline{\mathscr S}$ for which $\mathcal C = \mathscr S \cap \mathcal D$, in which case, any trivial solution $(\alpha_0,0)  \in M$ satisfying $(\alpha_0,0) \in \overline{\mathcal C}$ is said to be a branching point for $\mathcal C$. 
\end{definition}
\begin{definition}  \rm
For a given subgroup $H \leq G$, a non-empty set $\mathcal C \subset \mathscr S^H$ of non-trivial solutions admitting symmetries at least $(H)$ is called {\it a branch of non-trivial solutions to \eqref{eq:operator_equation} with symmetries at least $(H)$} if there exists a connected component $\mathcal D$ of $\overline{\mathscr S^H}$ for which $\mathcal C = \mathscr S^H \cap \mathcal D$.
\end{definition}
Having attended to these necessary preliminaries, let $(\lambda_0,0) \in \Lambda$ be an isolated critical point for \eqref{eq:operator_equation} with a deleted $\varepsilon$-neighborhood 
\begin{align} \label{def:deleted_epsilon_regular_nbhd}
\{ (\lambda,0) \in \br \times \br \times \mathscr H : 0 < \vert \lambda - \lambda_0 \vert < \varepsilon \},    
\end{align}
on which $\mathscr A(\lambda) : \mathscr H \rightarrow \mathscr H$ is an isomorphism, and choose $\delta > 0$ small enough such that
\begin{align} \label{eq:lbi_admissibility_condition}
\{ (\lambda,u) \in \br \times \br \times \mathscr H : | \lambda - \lambda_0 | = \varepsilon, \; \Vert u \Vert_{\mathscr H} \leq \delta \} \cap \overline{\mathscr S} = \emptyset.    
\end{align}
We call the $\bt^2$-invariant set
\begin{align} \label{def:isolating_cylinder}
 \mathscr O := \{ (\lambda,u) \in \br \times \br \times \mathscr H : | \lambda - \lambda_0 | < \varepsilon, \; \Vert u \Vert_{\mathscr H} < \delta \},   
\end{align}
an {\it isolating cylinder} at $(\lambda_0,0)$, and 
a $\bt^2$-invariant function $\Theta: \br \times \br \times \mathscr H \rightarrow \br$ is said to be an {\it auxiliary function} on $\mathscr O$ if it satisfies
\begin{align} \label{def:auxiliary_function_conditions}
   \begin{cases}
\Theta(\lambda,0) < 0 \quad & \text{ for } |\lambda - \lambda_0 | = \varepsilon; \\
\Theta(\lambda,u) > 0  \quad & \text{ for } |\lambda - \lambda_0 | \leq \varepsilon \text{ and } \Vert u \Vert_{\mathscr H} = \delta.
\end{cases}    
\end{align}
\begin{remark} \rm \label{rm:example_auxilliary_function}
For example, we can always use the auxiliary function
\begin{align} \label{def:auxiliary_function}
    \Theta(\lambda,u) := \frac{\varepsilon}{2} - |\lambda - \lambda_0 | + \frac{2\varepsilon}{\delta} \| u \|_{\mathscr H}.
\end{align}   
\end{remark}
Given any auxiliary function $\Theta$, the {\it complemented operator}
\begin{align} \label{def:complemented_operator_F}
\mathscr F_\Theta : \br \times \br \times \mathscr H \rightarrow \br \times \mathscr H, \quad   \mathscr F_\Theta(\lambda,u):= (\Theta(\lambda,u),\mathscr F(\lambda,u)),
\end{align}
is an $\mathscr O$-admissible $\bt^2$-map (cf. \eqref{eq:lbi_admissibility_condition}, Appendix \ref{sec:appendix_eqdeg}). We can now define the {\it local bifurcation invariant at $\lambda_0$}, as follows
\begin{align} \label{def:local_bif_inv}
\omega_{\bt^2}(\lambda_0) := \t2deg(\mathscr F_\Theta,\mathscr O),
\end{align}
where $\t2deg$ indicates the twisted $\bt^2$-equivariant degree (cf. Appendix \ref{sec:appendix_eqdeg}). The reader is referred to \cite{book-new,AED} for proof that the above construction of the local bifurcation invariant is independent of our choice of auxiliary function $\Theta$. On the other hand, the following Krasnosel'skii-type local bifurcation result is a direct consequence of the existence property for the $\bt^2$-equivariant degree (cf. Appendix \ref{sec:appendix_eqdeg} and, in particular, \eqref{def:coefficient_operator_notation} for discussion  of the notation `$\operatorname{coeff}^H$' in the context of the $\bz$-module $A_1(\bt^2):= \bz[\Phi_1(\bt^2)]$).
\begin{theorem} \rm \label{thm:abstract_local_bif} {\bf (Krasnosel'skii's Theorem)}
Suppose that $(\lambda_0,0) \in \Lambda$ is an isolated critical point for \eqref{eq:operator_equation}. If there is an orbit type $(H) \in \Phi_1(\bt^2)$ for which 
\[
\operatorname{coeff}^H(\omega_{\bt^2}(\lambda_0)) \neq 0,
\]
then there exists a branch $\mathcal C$ of non-trivial solutions to \eqref{eq:operator_equation} bifurcating from $(\lambda_0,0)$ with symmetries at least $(H)$.
\end{theorem}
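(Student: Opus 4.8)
The plan is to derive the conclusion from the existence and homotopy-invariance properties of the twisted $\bt^2$-equivariant degree, supplemented by a Rabinowitz--Whyburn connectedness argument. The property doing the essential work is the existence property: if $(H)\in\Phi_1(\bt^2)$ and $\operatorname{coeff}^H\!\big(\t2deg(\mathscr F_\Theta,\mathscr O)\big)\neq 0$, then $\mathscr F_\Theta$ must vanish at some $(\lambda^*,u^*)\in\mathscr O$ whose isotropy lies in $\Phi_1(\bt^2)$ and is at least $(H)$. Since an orbit type in $\Phi_1(\bt^2)$ has a one-dimensional Weyl group, whereas the trivial stratum $u=0$ has isotropy the whole group $\bt^2$ (Weyl group of dimension zero), such a detected zero is automatically non-trivial; and $\mathscr F_\Theta(\lambda^*,u^*)=0$ unpacks as $\Theta(\lambda^*,u^*)=0$ together with $\mathscr F(\lambda^*,u^*)=0$, so $(\lambda^*,u^*)$ solves \eqref{eq:operator_equation} with symmetries at least $(H)$. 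Running this inside a nested family of isolating cylinders with $\varepsilon,\delta\to 0$---whose bifurcation invariants all equal $\omega_{\bt^2}(\lambda_0)$ by the established independence of the auxiliary data---already shows that $(\lambda_0,0)$ is a bifurcation point, so that $(\lambda_0,0)\in\overline{\mathscr S^H}$.

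First I would upgrade this accumulation to a genuine branch by contradiction. Let $\mathcal D_0$ be the connected component of $(\lambda_0,0)$ in $\Sigma:=\big(\overline{\mathscr S^H}\cup\{(\lambda_0,0)\}\big)\cap\overline{\mathscr O}$, and suppose $\mathcal D_0=\{(\lambda_0,0)\}$, i.e.\ that no branch with symmetries at least $(H)$ has $(\lambda_0,0)$ in its closure. The first technical point is compactness of $\Sigma$: because $\mathscr F$ is a compact perturbation of the identity---assembled from the compact embeddings $j_1,j_2$ and the isomorphism $\mathscr L^{-1}$---and because condition \ref{a3} furnishes a priori bounds confining the solution set, $\Sigma$ is a compact metric space. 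Whyburn's separation lemma then applies: since in a compact space the component $\mathcal D_0$ coincides with the quasi-component of $(\lambda_0,0)$, it can be separated from $\Sigma\cap\partial\mathscr O$ by a relatively clopen partition $\Sigma=\Sigma_1\sqcup\Sigma_2$ with $(\lambda_0,0)\in\Sigma_1$ and $\Sigma\cap\partial\mathscr O\subseteq\Sigma_2$.

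Next I would thicken $\Sigma_1$ to a bounded, $\bt^2$-invariant open set $\Omega$ with $\Sigma_1\subset\Omega\subset\overline{\Omega}\subset\mathscr O$ and $\overline{\Omega}\cap\Sigma_2=\emptyset$, so that $\partial\Omega$ carries no non-trivial solution with symmetries at least $(H)$ and, after shrinking its $\lambda$-extent below $\varepsilon/2$, no $\Theta$-zero of the trivial stratum either; thus $\mathscr F_\Theta$ is $\Omega$-admissible and isolates $(\lambda_0,0)$ as its only critical point. On one hand, homotopy invariance gives $\operatorname{coeff}^H\!\big(\t2deg(\mathscr F_\Theta,\Omega)\big)=\operatorname{coeff}^H(\omega_{\bt^2}(\lambda_0))\neq 0$. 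On the other hand, the solution-free boundary $\partial\Omega$ together with the triviality of the bounded component $\mathcal D_0$ should permit a $\bt^2$-equivariant deformation of $\mathscr F_\Theta$ on $\Omega$---sweeping the parameter out of the critical region---to a map with no zeros of orbit type in $\Phi_1(\bt^2)$, forcing $\operatorname{coeff}^H\!\big(\t2deg(\mathscr F_\Theta,\Omega)\big)=0$ and yielding the contradiction. Hence $\mathcal D_0\neq\{(\lambda_0,0)\}$, and, since near $(\lambda_0,0)$ the only trivial point of $\overline{\mathscr S^H}$ is $(\lambda_0,0)$ itself, $\mathcal C:=\mathscr S^H\cap\mathcal D_0$ is a branch with symmetries at least $(H)$ having $(\lambda_0,0)$ in its closure; the symmetry membership is built in, since solutions with $G_u\geq H$ are exactly those lying in the fixed-point subspace $\mathscr H^H$.

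I expect the crux to be exactly the vanishing claimed in the last step. Naive excision is unavailable: because $(\lambda_0,0)$ is a limit of non-trivial solutions while still being its own component, $\Omega$ genuinely retains non-trivial zeros of $\mathscr F_\Theta$, so the equality of degrees cannot follow from simply discarding the boundary. The honest route is the equivariant Rabinowitz alternative of \cite{book-new,AED}: one reads $\t2deg(\mathscr F_\Theta,\Omega)$ as the sum of the localized invariants over the critical points enclosed by the bounded component and shows this total must vanish once $\partial\Omega$ is solution-free, so that a single enclosed critical point with nonzero $H$-coefficient is impossible unless the component is non-trivial. Establishing this degree-sum vanishing, and verifying the compactness of $\Sigma$ needed for Whyburn's lemma, are the technical heart of the argument; both are exactly the tools that remain available because the relevant isotropy $(H)\in\Phi_1(\bt^2)$ has a one-dimensional Weyl group, precisely where the classical Leray--Schauder degree---which sees only the even-dimensional eigenspace of $\mathscr A(\lambda)$---would fail.
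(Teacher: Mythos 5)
Your opening step misstates the existence property, and the first paragraph of your argument collapses with it. As stated in Theorem \ref{thm:GpropDeg}, the property asserts only that a nonzero coefficient $n_H$ produces a zero $x$ with $(\bt^2_x)\geq (H)$; it does \emph{not} assert that the isotropy of this zero lies in $\Phi_1(\bt^2)$. This matters because the complemented map genuinely has zeros on the trivial stratum: with the auxiliary function \eqref{def:auxiliary_function}, $\mathscr F_\Theta(\lambda,0)=\left(\varepsilon/2-|\lambda-\lambda_0|,0\right)$ vanishes on the entire circle $|\lambda-\lambda_0|=\varepsilon/2$, and those points have isotropy $\bt^2\geq H$, so they already satisfy the conclusion of the existence property. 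Worse, the strong form you invoke is false, and the counterexample is in the paper itself: Lemma \ref{lemm:analytic_formula} exhibits admissible pairs $(\mathcal A_{\bm k},\mathcal D_{\bm k})$ whose only zeros form the trivial circle $\{|\lambda|=1,\ v=0\}$ (isotropy the full torus, which is not in $\Phi_1(\bt^2)$), yet whose degree $\deg(\det\nolimits_\bc(\mu_{\bm k}))(\Gamma_{\bm k})$ is nonzero whenever the winding number of the determinant loop is nonzero --- this is exactly the situation of the linearized problem used in Lemma \ref{lemm:local_bif_inv_comp_formula}. So ``nonzero coefficient at $(H)$'' does not by itself detect a nontrivial zero, your nested-cylinder argument establishes nothing, and the whole difficulty of Krasnosel'skii-type theorems is precisely that the invariant is carried by the trivial circle.

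The standard repair --- and the reason the paper can dismiss the proof as ``a direct consequence of the existence property,'' deferring all details to \cite{book-new,AED} --- is the independence of $\omega_{\bt^2}(\lambda_0)$ from the choice of auxiliary function. Conditions \eqref{def:auxiliary_function_conditions} allow, for instance, $\Theta(\lambda,u):=\frac{2\varepsilon}{\delta}\|u\|_{\mathscr H}-\varepsilon$, which is strictly negative on the whole trivial stratum; for this $\Theta$ every zero of $\mathscr F_\Theta$ in $\mathscr O$ automatically satisfies $\|u\|_{\mathscr H}=\delta/2\neq 0$, and only now does the (weak) existence property deliver a nontrivial solution with symmetry at least $(H)$ in each shrinking cylinder. (Equivalently, one assumes no bifurcation and deforms $\Theta$ downward through admissible values to a zero-free map, contradicting the nonvanishing of the degree.) Your second and third paragraphs --- local compactness of the solution set plus Whyburn separation to upgrade accumulation to a connected branch --- are the correct outline for the branch statement and go beyond what the paper writes down, but there too you defer the decisive degree-vanishing step to the references, and your claim that $\operatorname{coeff}^H\left(\t2deg(\mathscr F_\Theta,\Omega)\right)=\operatorname{coeff}^H(\omega_{\bt^2}(\lambda_0))$ by homotopy invariance is unjustified, since $\Omega$ need not contain all zeros of $\mathscr F_\Theta$ lying in $\mathscr O$ (portions of the trivial circle, and nontrivial solutions with other symmetries). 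The fatal gap, however, is the first one: without repairing the existence-property step, nothing after it gets off the ground.
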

\subsection{The $\bt^2$-Isotypic Decomposition of $\mathscr H$}\label{sec:G_isotypic_decomp}
In order to effectively make use of Theorem \ref{thm:abstract_local_bif} to determine the existence of a branch of non-trivial solutions to \eqref{eq:operator_equation} bifurcating from the zero solution, we must derive a more practical formula for the computation of the local bifurcation invariant \eqref{def:local_bif_inv}. Our first step in this direction is to describe 
the $\bt^2$-isotypic decomposition of $\mathscr H$, i.e. a decomposition of our functional space into the direct sum of irreducible $\bt^2$-representations. 
\vs
As demonstrated in Appendix \eqref{sec:appendix_eqdeg}, every irreducible $\bt^2$-representation can be described in terms of the irreducible $S^1$-representations. In particular, if for each $m \in \bz$, we denote by $\mathcal U_m \simeq \bc$ the irreducible $S^1$-representation equipped with the {\it $m$-folded $S^1$-action}
\begin{align} \label{def:S1_action}
e^{i \vartheta}w := e^{i m\vartheta} \cdot w,  \quad e^{i \vartheta} \in S^1, \; w \in \mathcal U_m,
\end{align}
where `$\cdot$' indicates the standard complex multiplication and by $\mathcal U_0 \simeq \br$ the irreducible $S^1$-representation on which $S^1$ acts trivially, then 
the list of irreducible $\bt^2$-representations consists of the trivial $\bt^2$-representation $\mathcal V_{\bm 0} \simeq \br$ and, for each $\bm k \in \bz_0^2:=\{(k_1,k_2) \in \bz^2 : \text{ if } k_1 = 0 \text{ then } k_2 \geq 0\} \setminus \{\bm 0\}$, the irreducible $\bt^2$-representation $\mathcal V_{\bm k} \simeq \bc$ given by
\[
\mathcal V_{\bm k} := \mathcal U_{k_1} \otimes \mathcal U_{k_2}, \quad \bm k = (k_1,k_2),
\]
and equipped with the corresponding $\bt^2$-action
\[
(e^{i\vartheta},e^{i \varphi}) u := e^{i k_1\vartheta}e^{i k_2\varphi} \cdot u, \quad u \in \mathcal V_{\bm k}.
\]
\begin{remark}
Although the irreducible $S^1$-representations $\mathcal U_m$ and $\mathcal U_{-m}$ are equivalent for all $m =1,2,\ldots$, notice that two irreducible $\bt^2$-representations $\mathcal V_{\bm k}$ and $\mathcal V_{\bm k'}$ with $\bm k, \bm k' \in \bz_0^2$ are equivalent if and only if $\bm k = \bm k'$. 
For more details, we refer the reader to Appendix \ref{sec:appendix_eqdeg}.
\end{remark}
Recalling the Fourier expansion \eqref{def:full_fourier_expansion_H}, let's define the $\bt^2$-invariant subspaces
\begin{align*}
    \mathscr E_{m,n} := \{ J_{|m|}(\sqrt{s_{|m|,n}} r)e^{im \theta}a : a \in \bc \} \quad (m,n) \in  \bz \times \bn;
\end{align*}
and equip each $\mathscr E_{m,n} \subset \mathscr H$ with the corresponding $\bt^2$-action
\begin{align*}
(e^{i \vartheta},e^{i \varphi}) u(r,\theta) =  e^{i \varphi}u(r,\theta + m\vartheta), \quad u \in \mathscr E_{m,n},
\end{align*}
such that one has $\mathscr E_{m,n} \simeq \mathcal U_m \otimes \mathcal U_1$ for all $(m,n) \in \bz \times \bn$.
Consequently, the $\bt^2$-isotypic decomposition of $\mathscr H$ can  now be described in terms of the $\bt^2$-isotypic components $\mathscr H_{m} := \overline{\bigoplus_{n \in \bn } \mathscr E_{m,n}}$ as follows
\begin{align}\label{eq:G_isotypic_decomp}
    \mathscr H = \overline{\bigoplus_{m\in \bz} \mathscr H_{m}}.
\end{align}
To be clear, each $\bt^2$-isotypic component $\mathscr H_{m}$ is modeled on the corresponding irreducible $\bt^2$-representation $\mathcal V_{m,1} := \mathcal U_m \otimes \mathcal U_1$. For easy identification of the relevant symmetry subgroups (cf. Appendix \ref{sec:appendix_eqdeg}), we adopt the notation
\begin{align}\label{def:subgroup_identification}
  H_m := \{ (e^{im\varphi},e^{-i\varphi}) \in S^1 \times S^1 : \varphi \in[0,2\pi] \} \leq \bt^2, \quad m \in \bz,  
\end{align}
such that the full isotropy lattice becomes 
\[
\Phi_1(\bt^2;\mathscr H) = \bigcup_{m \in \bz} \Phi_1(\bt^2;\mathscr H_m), \quad \Phi_1(\bt^2;\mathscr H_m) := \{(H_m) \}.
\]
\begin{remark}\label{rm:orbit_type_identification}
Notice that each $(H_m) \in \Phi_1(\bt^2;\mathscr H)$ is maximal in the sense that if $(H) \in \Phi_1(\bt^2;\mathscr H)$ is such that $(H) \geq (H_m)$, then it must be the case that $H = H_m$. Therefore, any branch $\mathcal C$ of non-trivial solutions with symmetries at least $(H_m) \in \Phi_1(\bt^2;\mathscr H)$ consists only of solutions $u \in \mathscr H$ satisfying $\bt^2_u = H_m$. Notice also that the isotropy subgroup  $\bt^2_u \leq \bt^2$ associated with a non-trivial function $u \in \mathscr H \setminus \{0\}$ satisfies the relation $\bt^2_u \geq H_{m}$ if and only if for all $\varphi \in [0,2\pi]$, one has
\begin{align*}
   e^{im\varphi}u(r,\theta - \varphi) = u(r,\theta).
\end{align*}
In particular, setting $\varphi = \theta$, we find that $u$ must satisfy the relation
\[
u(r,\theta) = e^{im\theta}u(r,0).
\]
In other words, each element of the fixed point space $\mathscr S^{H_m}$ is a spiral wave with $|m|$ arms admitting orientation corresponding to the value of $\sign m$ if $m \neq 0$ and a target wave, otherwise. 
\end{remark}
With the $\bt^2$-isotypic decomposition of $\mathscr H$ at hand, we can begin to collect the spectral data related to the $\bt^2$-equivariant linear operator $\mathscr A(\alpha, \beta): \mathscr H \rightarrow \mathscr H$. For example, we are guaranteed, by Schur's Lemma, that $\mathscr A(\alpha,\beta)$ respects the $\bt^2$-isotypic decomposition \eqref{eq:G_isotypic_decomp} in the sense that
\begin{align*}
    \mathscr A(\alpha,\beta)(\mathscr E_{m,n}) \subset \mathscr E_{m,n}, \quad (m,n) \in \bz \times \bn.
\end{align*}
Therefore, $\mathscr A(\alpha,\beta)$ admits the following block-matrix decomposition
\begin{align*} 
\mathscr A(\alpha,\beta) =  
\bigoplus_{m \in \bz} \bigoplus_{n \in \bn  } \mathscr A_{m,n}(\alpha,\beta), \quad \mathscr A_{m,n}(\alpha,\beta) := \mathscr A(\alpha,\beta)|_{\mathscr E_{m,n}}: \mathscr E_{m,n} \rightarrow \mathscr E_{m,n},
\end{align*}
such that the spectrum of $\mathscr A(\alpha,\beta)$ is given by
\begin{align*}
    \sigma(\mathscr A(\alpha,\beta)) = \bigcup_{m \in \bz} \bigcup_{n\in \bn} \sigma(\mathscr A_{m,n}(\alpha,\beta)).
\end{align*}
In particular, we find by direct computation that the spectrum associated with each matrix
$\mathscr A_{m,n}(\alpha,\beta)$ is comprised of the complex eigenvalue
\begin{align} 
\label{def:eigenvalues_Anm}
\mu_{m,n}(\alpha,\beta) := \frac{(1+i\eta)s_{|m|,n} + \alpha +i \beta - i \omega m}{(1+i\eta)(1+s_{|m|,n})}.
\end{align}
\begin{remark} \label{rm:critical_values} \rm
Recall that a trivial solution $(\alpha_0,\beta_0,0) \in \br \times \br \times \mathscr H$ belongs to the critical set of \eqref{eq:operator_equation} if and only if $0 \in \sigma(\mathscr A(\alpha_0,\beta_0))$. In other words, $(\alpha_0,\beta_0,0) \in \Lambda$ if and only if 
there exist $(m,n) \in \bz \times \bn$
for which 
\begin{align*}
\begin{cases}
\alpha_0 = -s_{|m|,n}; \\
\beta_0 = \omega m  -\eta s_{|m|,n}.
\end{cases}
\end{align*}
Notice that each eigenvalue $\mu_{m,n}: \bc \rightarrow \bc$ admits exactly one root since the zeros of $J'_{|m|}: \br \rightarrow \br$ form a strictly increasing sequence $s_{|m|,0} < s_{|m|,1} < \cdots < s_{|m|,n} < \cdots$. On the other hand, two eigenvalues $\mu_{m,n}$ and $\mu_{m',n'}$ share the same root if and only if
\begin{align*}
 \begin{cases}
s_{|m|,n} = s_{|m'|,n'}; \\
\omega m  - \eta s_{|m|,n} = \omega m'  - \eta s_{|m'|,n'},
\end{cases}   
\end{align*}
which holds if and only if $m = m'$ and $s_{|m|,n} = s_{|m|,n'}$. Again, by strict monotonicity of the sequence  $\{s_{|m|,n}\}_{n \in \mathbb N }$, notice that the latter condition implies $n = n'$. Therefore, each critical point can be uniquely associated with an index pair $(m,n) \in \bz \times \bn$ using the notation 
\begin{align}\label{def:critical_point_identification}
\lambda_{m,n} := \alpha_{m,n} + i \beta_{m,n}, \quad  (\alpha_{m,n}, \beta_{m,n}) :=  \left(-s_{|m|,n}, \; \omega m  - \eta s_{|m|,n} \right),
\end{align}
such that the critical set, in addition to being discrete, admits the following explicit description
\[
\Lambda = \{ (\alpha_{m,n},\beta_{m,n},0) : m \in \bz, \;  n \in  \bn  \}.
\]
\end{remark}
\subsection{Computation of the Local Bifurcation Invariant} \label{sec:computation_local_bif_inv}
As before, let $(\lambda_0,0) \in \Lambda$ be a critical point for \eqref{eq:operator_equation} with a deleted $\varepsilon$-neighborhood \eqref{def:deleted_epsilon_regular_nbhd} on which $\mathscr A(\lambda):\mathscr H \rightarrow \mathscr H$ is an isomorphism and suppose that a number $\delta > 0$ is chosen such that
\[
\mathscr F(\lambda,u) \neq 0, \text{ for all } (\lambda,u) \in \br \times \br \times \mathscr H \text{ with } |\lambda - \lambda_0| = \epsilon \text{ and } 0 < \| u \|_{\mathscr H} \leq \delta.
\]
Then, for any auxiliary function $\Theta:\br \times \br \times \mathscr H \rightarrow \br$ satisfying conditions \eqref{def:auxiliary_function_conditions} on the isolating cylinder \eqref{def:isolating_cylinder} (in particular, for the auxiliary function \eqref{def:auxiliary_function}), the complemented operator \eqref{def:complemented_operator_F} is $\mathscr O$-admissibly $\bt^2$-homotopic to the linear operator
\begin{align*}
    \hat{\mathscr F_\Theta}(\lambda,u) := (\Theta(\lambda,u),\mathscr A(\lambda)u)  = \left(\Theta(\lambda,u), \bigoplus_{m\in \bz}\bigoplus_{n\in\bn} \mathscr A_{m,n}(\lambda)  \right).
\end{align*}   
Adopting the notations
\begin{align*}
\begin{cases}
\wt{\Theta}_{m,n}(\lambda,u) :=        \Theta|_{\bc \times \mathscr E_{m,n}}(\lambda,u);\\ \wt{\mathscr A}_{m,n}(\lambda)u := \left(\wt{\Theta}_{m,n}(\lambda,u), \mathscr A_{m,n}(\lambda)u\right); \\ \wt{\mathscr O}_{m,n} :=  \mathscr O \cap \left(\bc \times \mathscr E_{m,n}\right),
\end{cases}
\end{align*}
and combining the homotopy property of the $\bt^2$-equivariant degree (see the third degree axiom in Appendix \ref{sec:appendix_eqdeg}) with the Splitting Lemma (see Lemma \eqref{lemm:splitting_lemma}, Appendix \ref{sec:appendix_comp_form}), the local bifurcation invariant \eqref{def:local_bif_inv} at the isolated critical point $(\lambda_0,0)$ becomes
\begin{align}
\label{eq:local_bif_inv_reformulation}
\omega_{\bt^2}(\lambda_0) &= \t2deg\left( 
\bigoplus_{m \in \bz} \bigoplus_{n \in \bn}  \wt{\mathscr A}_{m,n}, \bigoplus_{m \in \bz} \bigoplus_{n \in \bn} \wt{\mathscr O}_{m,n} \right) \\ &= 
\sum_{m \in \bz} \sum_{n \in \bn} \t2deg(\wt{\mathscr A}_{m,n},  \wt{\mathscr O}_{m,n}). \nonumber
\end{align}
Since $\id - \mathscr A(\alpha,\beta): \mathscr H \rightarrow \mathscr H$ is compact for all $(\alpha,\beta) \in \br \times \br$, the eigenvalues $\mu_{m,n}(\alpha_0,\beta_0)$ are nonzero for almost all index pairs $(m,n) \in \bz \times \bn$ such that, with only finitely many exceptions, one has
\begin{align*}
 \t2deg( \wt{\mathscr A}_{m,n}, \wt{\mathscr O}_{m,n}) = 0.
\end{align*}
Indeed, by Lemma \ref{lemm:analytic_formula}, the $\bt^2$-equivariant degree of each complemented operator $(\theta_{m,n}, \mathscr A_{m,n})$ on its corresponding isolating neighborhood 
$\wt{\mathscr O}_{m,n}$ is fully specified by the spectrum of $\mathscr A_{m,n}(\alpha,\beta)$ and the irreducible $\bt^2$-representation $\mathcal V_{m,1}$ (resp. $\mathcal V_0$, in the case that $m=0$) according to the rule:
\begin{align*}
        \t2deg( \wt{\mathscr A}_{m,n}, \wt{\mathscr O}_{m,n}) = 
\begin{cases}
    \rho_{m,n}(\alpha_0,\beta_0) (H_{m}) \quad & \text{ if } \mu_{m,n}(\alpha_0,\beta_0) = 0; \\
        0 & \text{ otherwise,}
    \end{cases}
\end{align*}   
where $\rho_{m,n}(\alpha,\beta) := \deg( \det\nolimits_\bc \mathscr A_{m,n}, B_\varepsilon(\lambda))$. 
\begin{lemma} \label{lemm:local_bif_inv_comp_formula} \rm
Using the notation \eqref{def:critical_point_identification}, the local bifurcation invariant at any critical point $(\lambda_{m,n},0) = (\alpha_{m,n},\beta_{m,n}) \in \Lambda$ is given by the rule
\begin{align}
    \omega_{\bt^2}(\lambda_{m,n}) = (H_m).
\end{align}
\end{lemma}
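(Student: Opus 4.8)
The plan is to exploit the reduction already established in the preceding discussion. By \eqref{eq:local_bif_inv_reformulation}, the invariant $\omega_{\bt^2}(\lambda_{m,n})$ is an infinite sum of local degrees $\t2deg(\wt{\mathscr A}_{m',n'}, \wt{\mathscr O}_{m',n'})$, each of which vanishes unless the corresponding eigenvalue $\mu_{m',n'}(\lambda_{m,n})$ is zero. First I would invoke Remark \ref{rm:critical_values}, which shows that at the critical value $\lambda_{m,n}$ the \emph{only} eigenvalue that vanishes is $\mu_{m,n}$ itself; consequently the entire sum collapses to a single surviving term, and by the rule displayed just before the lemma we obtain
\[
\omega_{\bt^2}(\lambda_{m,n}) = \rho_{m,n}(\alpha_{m,n},\beta_{m,n})\,(H_m).
\]
It then remains only to verify that the integer coefficient $\rho_{m,n}(\alpha_{m,n},\beta_{m,n}) = \deg(\det\nolimits_\bc \mathscr A_{m,n}, B_\varepsilon(\lambda_{m,n}))$ equals $1$.

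The key computation is that, since each block $\mathscr A_{m,n}(\alpha,\beta)$ acts on the one-dimensional complex space $\mathscr E_{m,n} \simeq \bc$, its complex determinant is simply its single eigenvalue $\mu_{m,n}(\alpha,\beta)$. Writing $\lambda = \alpha + i\beta$ and separating the numerator of \eqref{def:eigenvalues_Anm} into real and imaginary parts, I would observe that it collapses to $(\alpha - \alpha_{m,n}) + i(\beta - \beta_{m,n}) = \lambda - \lambda_{m,n}$, precisely because of the defining identities \eqref{def:critical_point_identification}; hence
\[
\mu_{m,n}(\lambda) = \frac{\lambda - \lambda_{m,n}}{(1+i\eta)(1+s_{|m|,n})}.
\]
Viewed as a map $\bc \to \bc$ of the parameter $\lambda$, the complex determinant $\det\nolimits_\bc \mathscr A_{m,n}$ is therefore a complex-\emph{affine} isomorphism: multiplication of $\lambda - \lambda_{m,n}$ by the fixed nonzero scalar $[(1+i\eta)(1+s_{|m|,n})]^{-1}$.

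The final step is to read off the Brouwer degree. Any multiplication by a nonzero complex scalar is a real-linear automorphism of $\bc \cong \br^2$ with positive Jacobian determinant, hence orientation-preserving, so the local degree of $\lambda \mapsto \mu_{m,n}(\lambda)$ at its simple zero $\lambda_{m,n}$ is $+1$ for every sufficiently small $\varepsilon$. Thus $\rho_{m,n}(\alpha_{m,n},\beta_{m,n}) = 1$ and $\omega_{\bt^2}(\lambda_{m,n}) = (H_m)$, as claimed. I do not expect any genuine obstacle in this argument; the only point requiring care is the algebraic bookkeeping that the numerator of $\mu_{m,n}$ reduces cleanly to $\lambda - \lambda_{m,n}$, which is exactly the content of the normalization \eqref{def:critical_point_identification}.
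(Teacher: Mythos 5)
Your proposal is correct and follows essentially the same route as the paper: collapse the sum \eqref{eq:local_bif_inv_reformulation} to the single term indexed by $(m,n)$ via the isotopic simplicity of the critical point (Remark \ref{rm:critical_values}), then show $\rho_{m,n}(\alpha_{m,n},\beta_{m,n}) = 1$ because $\mu_{m,n}$ has a nonsingular, orientation-preserving Jacobian at its unique zero. If anything, your explicit rewriting $\mu_{m,n}(\lambda) = \frac{\lambda - \lambda_{m,n}}{(1+i\eta)(1+s_{|m|,n})}$ and the appeal to complex multiplication having positive real determinant is a cleaner justification of the sign than the paper's formula \eqref{eq:Jacobian_mu_mnj}, which mixes a complex scalar prefactor with a real identity matrix.
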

\begin{proof}
Since each critical point is isotopically simple, i.e. since one has $\mu_{m,n}^{-1}(0) = \{ (\alpha_{m,n},\beta_{m,n})\}$ for all $(m,n) \in \bz \times \bn$ (cf. Remark \ref{rm:critical_values}), the computational formula \eqref{eq:local_bif_inv_reformulation} simplifies to
\begin{align*}
    \omega_{\bt^2}(\lambda_{m,n}) = \t2deg( \wt{\mathscr A}_{m,n}, \wt{\mathscr O}_{m,n}).
\end{align*}
At this point the result follows from the observation that, for all $(\alpha,\beta) \in \br \times \br$, the Jacobian matrix
\begin{align} \label{eq:Jacobian_mu_mnj}
 D \mu_{m,n}(\alpha,\beta) = 
 \frac{1}{(1+i\eta)(s_{|m|,n}+1)}\begin{pmatrix*}[r]
1 & 0 \\  0 & 1
\end{pmatrix*}, \end{align}
is non-singular such that the local Brouwer degrees $\rho_{m,n}(\alpha,\beta)$ are always well-defined and can be computed as follows
\begin{align} \label{eq:rho_mn_formula}
    \rho_{m,n}(\alpha,\beta)
    &= \deg( \det\nolimits_\bc \mathscr A_{m,n},B_{\varepsilon}(\lambda) ) \\ &= \deg(\mu_{m,n}, B_{\varepsilon}(\lambda)) = 
\operatorname{sign}\left[ \det  D \mu_{m,n}(\alpha,\beta)\right]  \nonumber \\ 
 &= \operatorname{sign}\left[  \det
 \begin{pmatrix*}[r]
1 & 0 \\ \nonumber  0 & 1
\end{pmatrix*}\right] = 1. \nonumber
\end{align}
\end{proof}
\vs
We are now in a position to formulate our main local equivariant bifurcation result.
\begin{theorem} \rm \label{thm:main_local_bif}
For each fixed $m \in \bz$, the trivial solution to \eqref{eq:system} at each of the critical parameter values $(\alpha_{m,n},\beta_{m,n}) \in \br \times \br$ with $n \in \bn$ is a branching point for a branch of non-trivial solutions, consisting of $|m|$-armed spiral waves with orientation determined by the value of $\sign m$ if $m \neq 0$ and target waves, otherwise. 
\end{theorem}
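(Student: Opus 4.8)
The plan is to obtain the theorem as a direct application of the equivariant Krasnosel'skii result, Theorem \ref{thm:abstract_local_bif}, at each individual critical point $(\lambda_{m,n},0) \in \Lambda$; essentially all of the analytic and topological substance has already been front-loaded into the spectral analysis of $\mathscr A(\alpha,\beta)$ and the degree computation, so the proof itself is an assembly of three ingredients. First I would record that each critical point is \emph{isolated}. This is immediate from Remark \ref{rm:critical_values}: since the zeros of $J'_{|m|}$ form a strictly increasing sequence, each eigenvalue $\mu_{m,n}$ has a unique root and distinct index pairs $(m,n)$ yield distinct parameter values $\lambda_{m,n}$, so the critical set $\Lambda$ is discrete and each $(\lambda_{m,n},0)$ admits a deleted $\varepsilon$-neighborhood on which $\mathscr A(\lambda)$ is an isomorphism.

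Next I would invoke Lemma \ref{lemm:local_bif_inv_comp_formula}, which identifies the local bifurcation invariant at $(\lambda_{m,n},0)$ as the single basis element $\omega_{\bt^2}(\lambda_{m,n}) = (H_m)$ of the $\bz$-module $A_1(\bt^2)$. In particular $\operatorname{coeff}^{H_m}(\omega_{\bt^2}(\lambda_{m,n})) = 1 \neq 0$, so the nontriviality hypothesis of Theorem \ref{thm:abstract_local_bif} is met with the orbit type $(H) = (H_m)$. The theorem then guarantees the existence of a branch $\mathcal C$ of non-trivial solutions to \eqref{eq:operator_equation} bifurcating from $(\lambda_{m,n},0)$ with symmetries at least $(H_m)$.

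Finally, to upgrade ``symmetries at least $(H_m)$'' into the precise geometric description of the solutions, I would appeal to Remark \ref{rm:orbit_type_identification}. There $(H_m)$ is shown to be maximal in $\Phi_1(\bt^2;\mathscr H)$, forcing any non-trivial $u \in \mathcal C$ to have isotropy exactly $\bt^2_u = H_m$; and the defining relation $\bt^2_u \geq H_m$ is shown to be equivalent, upon setting $\varphi = \theta$, to the profile $u(r,\theta) = e^{im\theta}u(r,0)$. This is precisely the form \eqref{def:spiral_wave} of an $|m|$-armed spiral wave with orientation $\sign m$ (a target wave when $m = 0$), which completes the argument. The only genuinely delicate point underlying this chain is the degree computation of Lemma \ref{lemm:local_bif_inv_comp_formula} itself, where the Splitting Lemma and the analytic formula of Lemma \ref{lemm:analytic_formula} collapse the infinite isotypic sum \eqref{eq:local_bif_inv_reformulation} down to the one surviving term carrying coefficient $\rho_{m,n} = 1$; once that computation is in hand, the present theorem requires no further estimates and follows formally.
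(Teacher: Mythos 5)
Your proposal is correct and follows exactly the same route as the paper, whose proof is a one-line citation of the same three ingredients you assemble: Lemma \ref{lemm:local_bif_inv_comp_formula} giving $\omega_{\bt^2}(\lambda_{m,n}) = (H_m)$, Theorem \ref{thm:abstract_local_bif} (Krasnosel'skii) yielding the branch, and Remark \ref{rm:orbit_type_identification} translating the isotropy $(H_m)$ into the $|m|$-armed spiral (or target) wave profile. Your write-up merely makes explicit the isolatedness of the critical points (via Remark \ref{rm:critical_values}) and the maximality argument that the paper leaves implicit, which is a faithful expansion rather than a different argument.
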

\begin{proof}
A direct consequence of the combination of Theorem \ref{thm:abstract_local_bif}, Remark \ref{rm:orbit_type_identification}, Lemma \ref{lemm:local_bif_inv_comp_formula} and the existence property of the $\bt^2$-equivariant degree (cf. Appendix \ref{sec:appendix_eqdeg}).
\end{proof}
\subsection{Resolution of The Rabinowitz Alternative} \label{sec:rab_alt}
We can study the global behaviour of the branches of spiral wave solutions whose existence has been predicted by Theorem \ref{thm:main_local_bif} using the following Rabinowitz-type argument, the proof of which can be found in \cite{book-new, AED}:
\begin{theorem} \rm \label{thm:rab_alt} {\bf (Rabinowitz' Alternative)}
Let $\mathcal U \subset \br \times \br \times \mathscr H$ be any open bounded $\bt^2$-invariant set with $\partial \mathcal U \cap \Lambda = \emptyset$. If $\mathcal C \subset \mathscr S$ is a branch of non-trivial solutions to \eqref{eq:operator_equation} bifurcating from a critical point $(\lambda_0,0) \in \mathcal U \cap \Lambda$, then one has the following alternative: 
\begin{enumerate}[label=$(\alph*)$]
\item\label{alt_a}  either $\mathcal C \cap \partial \mathcal U \neq \emptyset$;
    \item\label{alt_b} or there exists a finite set
    \begin{align*}
        \overline{\mathcal C} \cap \Lambda = \{ (\lambda_0,0),(\lambda_1,0), \ldots, (\lambda_{k},0) \},
    \end{align*}
    satisfying the following relation
    \begin{align*}
    \sum\limits_{i=0}^{k} \omega_{\bt^2}(\lambda_i) = 0.
    \end{align*}
\end{enumerate} 
\end{theorem}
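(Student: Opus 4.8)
The plan is to prove the stated dichotomy in the form: \emph{assuming alternative \ref{alt_a} fails}, i.e. $\mathcal C \cap \partial\mathcal U = \emptyset$, I will establish alternative \ref{alt_b}. The whole argument rests on compactness of the solution set together with the additivity and homotopy properties of the $\bt^2$-equivariant degree, structured in the classical Rabinowitz manner.

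First I would record the compactness that makes everything go. Since $\mathscr F = \id - \mathscr K$ with $\mathscr K$ compact (cf. Section \ref{sec:reformulation}) and $\mathcal U$ is bounded, any sequence of solutions in $\overline{\mathcal U}$ has a convergent subsequence; hence $\overline{\mathscr S} \cap \overline{\mathcal U}$ is compact. The next step is to upgrade ``$\mathcal C$ avoids $\partial\mathcal U$'' to ``$\overline{\mathcal C}$ avoids $\partial\mathcal U$'': if $p \in \overline{\mathcal C} \cap \partial\mathcal U$, then $p$ is a solution lying in the connected component $\mathcal D \supseteq \mathcal C$ of $\overline{\mathscr S}$; were $p$ trivial it would be a limit of non-trivial solutions, hence a bifurcation point, forcing $p \in \Lambda \cap \partial\mathcal U = \emptyset$, a contradiction, so $p \in \mathscr S \cap \mathcal D = \mathcal C$, contradicting the failure of \ref{alt_a}. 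Thus $\overline{\mathcal C} \subset \mathcal U$ is compact, and since $\Lambda$ is discrete (Remark \ref{rm:critical_values}) the set $\overline{\mathcal C} \cap \Lambda = \{(\lambda_0,0),\dots,(\lambda_k,0)\}$ is finite.

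Second, I would isolate the component. Applying a Kuratowski--Whyburn type separation lemma inside the compact metric space $\overline{\mathscr S} \cap \overline{\mathcal U}$ (and taking $\bt^2$-invariant hulls, which is legitimate because $\overline{\mathscr S}$, $\mathcal U$, and hence $\mathcal D$ are $\bt^2$-invariant), I would produce an open bounded $\bt^2$-invariant set $\Omega$ with $\mathcal D \subset \Omega$, $\overline{\Omega} \subset \mathcal U$, $\partial\Omega \cap \overline{\mathscr S} = \emptyset$, and $\overline{\Omega} \cap \Lambda = \{(\lambda_i,0)\}_{i=0}^{k}$. Note that the trivial solutions lying on $\partial\Omega$ are then all regular, and---by compactness of $\overline{\mathcal C}$ together with the implicit function theorem at regular trivial points---there is $\eta_0 > 0$ so that every non-trivial solution in $\overline{\Omega}$ outside a fixed family of disjoint isolating cylinders $\mathscr O_i$ at the $(\lambda_i,0)$ satisfies $\|u\|_{\mathscr H} \geq \eta_0$. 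Choosing a global auxiliary function $\Theta$ (patched from the local models \eqref{def:auxiliary_function}, positive on the shell $\|u\|_{\mathscr H} \geq \eta_0$ outside the cylinders and negative on the trivial solutions of $\partial\Omega$) makes the complemented operator $\mathscr F_\Theta$ an $\Omega$-admissible $\bt^2$-map whose zeros are confined to $\bigcup_i \mathscr O_i$. Excision and additivity of the $\bt^2$-degree then yield
\[
\t2deg(\mathscr F_\Theta,\Omega) = \sum_{i=0}^{k} \t2deg(\mathscr F_\Theta,\mathscr O_i) = \sum_{i=0}^{k} \omega_{\bt^2}(\lambda_i).
\]

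The final and hardest step is to show $\t2deg(\mathscr F_\Theta,\Omega) = 0$, whence \ref{alt_b} follows. The mechanism is that the continuum $\overline{\mathcal C}$ is compact and stays clear of $\partial\Omega$, so the complemented field carries no net bifurcation index: I would construct an $\Omega$-admissible $\bt^2$-homotopy deforming $\mathscr F_\Theta$ to a zero-free field, the admissibility being preserved precisely because $\partial\Omega$ meets no non-trivial solutions and only regular trivial ones. I expect this vanishing to be the main obstacle, and the subtlety is genuine: one cannot simply deform $\Theta$ to be positive, since the trivial branch $M$ persists through $\Omega$ and would obstruct admissibility as $\Theta(\lambda,0)$ changes sign. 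What rescues the argument is the \emph{connectedness} of $\overline{\mathcal C}$ through all of the $(\lambda_i,0)$: the branch bridges the critical points into a single boundary-avoiding continuum, forcing the individually nonzero local invariants $\omega_{\bt^2}(\lambda_i)$ to cancel in $A_1(\bt^2)$. Combining the two evaluations of $\t2deg(\mathscr F_\Theta,\Omega)$ gives $\sum_{i=0}^{k}\omega_{\bt^2}(\lambda_i) = 0$, which is exactly alternative \ref{alt_b}.
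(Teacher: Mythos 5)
The paper does not actually prove this theorem: it explicitly defers to \cite{book-new, AED}, so your attempt can only be measured against the standard argument found in those references. Your scaffolding reproduces that argument faithfully and correctly: compactness of $\overline{\mathscr S} \cap \overline{\mathcal U}$ from the compact-perturbation structure of $\mathscr F$, the upgrade from $\mathcal C \cap \partial\mathcal U = \emptyset$ to $\overline{\mathcal C} \cap \partial\mathcal U = \emptyset$ (a trivial limit point would be a bifurcation point, hence in $\Lambda \cap \partial\mathcal U = \emptyset$), finiteness of $\overline{\mathcal C} \cap \Lambda$ from discreteness of $\Lambda$ (Remark \ref{rm:critical_values}), the Kuratowski--Whyburn separation producing the invariant isolating set $\Omega$ with $\partial\Omega \cap \overline{\mathscr S} = \emptyset$, and the excision/additivity computation $\t2deg(\mathscr F_\Theta,\Omega) = \sum_{i}\omega_{\bt^2}(\lambda_i)$. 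One minor imprecision: for the zeros of $\mathscr F_\Theta$ to be confined to the cylinders you need $\Theta < 0$ on \emph{all} trivial solutions in $\overline{\Omega}$ outside the cylinders, not merely on the trivial solutions of $\partial\Omega$; your patched construction can arrange this, but it should be stated.

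The genuine gap is the final step, which you yourself flag as the crux and then do not prove: the vanishing $\t2deg(\mathscr F_\Theta,\Omega) = 0$. Your proposed rescue --- that connectedness of $\overline{\mathcal C}$ through the critical points ``forces the individually nonzero local invariants to cancel in $A_1(\bt^2)$'' --- is circular: that cancellation \emph{is} alternative \ref{alt_b}, the statement being proved, and connectedness supplies no degree-theoretic mechanism here (its only role is upstream, in the separation argument that yields $\Omega$). The missing idea is concrete and short: homotope the auxiliary component by \emph{translation}, $\Theta_t(\lambda,u) := \Theta(\lambda,u) - t$ for $t \in [0,T]$. This homotopy is $\Omega$-admissible for every $t \geq 0$ precisely because the only zeros of $\mathscr F$ on $\partial\Omega$ are trivial points, where $\Theta(\lambda,0) < 0$ and hence $\Theta_t(\lambda,0) < 0$, while nontrivial solutions are absent from $\partial\Omega$ by construction. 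Taking $T > \sup_{\overline{\Omega}}\Theta$ makes the first component strictly negative on all of $\overline{\Omega}$, so $(\Theta_T,\mathscr F)$ is zero-free in $\Omega$ and its degree vanishes; the homotopy axiom then gives $\sum_{i=0}^{k}\omega_{\bt^2}(\lambda_i) = \t2deg(\mathscr F_\Theta,\Omega) = 0$. You correctly diagnosed why the naive deformation (raising $\Theta$ to be positive on the trivial branch) breaks admissibility, but the cure is to push $\Theta$ \emph{down}, not up; without this homotopy your proof is incomplete at its decisive step.
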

\begin{remark} \rm
    If a branch of non-trivial solutions $\mathcal C \subset \overline{\mathscr S}$ satisfies $\mathcal C \cap \partial \mathcal U \neq \emptyset$ for every open bounded $\bt^2$-invariant set $\mathcal U \subset \br \times \br \times \mathscr H$ with $\partial \mathcal U \cap \Lambda = \emptyset$, then  $\mathcal C$ must be {\it unbounded}. Therefore, a sufficient condition for the unboundedness of a branch $\mathcal C \subset \overline{\mathscr S}$ is the following:
        \begin{align*}
        \sum\limits_{(\lambda_i,0) \in\overline{\mathcal C} \cap \Lambda} \omega_{\bt^2}(\lambda_i) \neq 0.
    \end{align*}
\end{remark}
We now have all the necessary components to prove our main global equivariant bifurcation result:
\begin{proof}[Proof of Theorem~\ref{thm:main_global_bif}]
Notice that the coefficient standing next to $(H_m)$ in the local bifurcation invariant at any particular $(\lambda_{m',n'},0) \in \Lambda$ is determined by the rule
\begin{align*}
\operatorname{coeff}^{H_m}\left(\omega_{\bt^2}(\lambda_{m',n'})\right)=    \begin{cases}
        1 & \text{ if } m' = m; \\
        0 & \text{ otherwise}.
    \end{cases}
\end{align*}
We notice again (cf. Theorem \ref{thm:main_local_bif}) that emerging from the trivial solution at each critical point is a branch of non-trivial solutions $\mathcal C$ with corresponding spiral or target pattern symmetries. To show that these branches are unbounded, suppose for contradiction that a branch $\mathcal{C}$ emerging from the critical point $(\lambda_{m,n},0) \in \Lambda$ is bounded. Then $\overline{\mathcal{C}} \cap \Lambda$ is a finite set of critical points, including $(\lambda_{m,n},0)$. By the Rabinowitz alternative, this would require
 \begin{align*} \operatorname{coeff}^{H_m}\left( \sum_{(\lambda_{m',n'},0)\in \overline{\mathcal C} \cap \Lambda} \omega_{\bt^2}(\lambda_{m',n'}) \right) = | (\lambda_{m',n'},0) \in \overline{\mathcal C} \cap \Lambda : m' = m \}| =0, \end{align*}
    which is in contradiction with the assumption that $(\lambda_{m,n},0) \in \overline{\mathcal C} \cap \Lambda$.
\end{proof}

\newpage
\appendix
\section{The $\bt^n$-Equivariant Degree}\label{sec:appendix_eqdeg}
\noi{\bf Equivariant notation:}
In what follows, we indicate by $\bt^n$ the $n$-dimensional torus and, for any subgroup  $H \leq \bt^n$, we denote by $(H)$ its conjugacy class in $\bt^n$.
Notice that the set of all subgroup conjugacy classes $\Phi(\bt^n):= \{(H) : H \leq \bt^n \}$ admits the natural partial ordering:
\[
(H)\leq (K) \iff\;\; H\leq K.
\]
As is possible with any partially ordered set, we extend the natural order over $\Phi(\bt^n)$ to a total order, which we indicate by $\preccurlyeq$ to differentiate the two relations. We also put $\Phi_1(\bt^n):= \{ (H) \in \Phi(\bt^n) :  \dim \bt^n/H = 1\}$ and $A_1(\bt^n) := \mathbb{Z}[\Phi_1(\bt^n)]$, noticing that every element of the of the free $\mathbb{Z}$-module $A_1(\bt^n)$ can be expressed as a formal sum over some finite number of generator elements  
\[
a = n_1(H_1) + n_2(H_2) + \cdots + n_m(H_m), \quad a \in A_1(\bt^n).
\]
In particular, we can specify the integer coefficient standing next to the generator element $(H) \in \Phi_1(\bt^n)$ in any element $a \in A_1(\bt^n)$ using the notation
\begin{align}\label{def:coefficient_operator_notation}
\operatorname{coeff}^H(a) = n_H.
\end{align}
Given a $\bt^n$-space $X$ with an element $x \in X$, we denote by
$\bt^n_{x} :=\{g\in \bt^n:gx=x\}$ the {\it isotropy group} of $x$
and we call $(\bt^n_{x}) \in \Phi(\bt^n)$ the {\it orbit type} of $x \in X$. Moreover, we put $\Phi(\bt^n,X) := \{(H) \in \Phi(\bt^n)  : 
(H) = (\bt^n_x) \; \text{for some $x \in X$}\}$ and also  $\Phi_1(\bt^n,X):= \Phi(\bt^n,X) \cap \Phi_1(\bt^n)$. For a subgroup $H\leq \bt^n$, the subspace $
X^{H} :=\{x\in X:\bt^n_{x}\geq H\}$ is called the {\it $H$-fixed-point subspace} of $X$. If $Y$ is another $\bt^n$-space, then a continuous map $f : X \to Y$ is said to be {\it $\bt^n$-equivariant} if $f(gx) = gf(x)$ for each $x \in X$ and $g \in \bt^n$.
\noi{\bf Classification of the Real Irreducible $\bt^n$-Representations:} Since every complex irreducible representation of an abelian group is one-dimensional, each irreducible $\bt^n$-representation $\mathcal V$ can be identified with a continuous homomorphism $T:\bt^n \rightarrow S^1$. Notice also that the set of homomorphisms from the torus $\bt^n$ to the circle group $S^1$  is related to the group $\bz^n$ via the isomorphism
\begin{align}\label{def:isomorphism}
   (z, \bm k) \rightarrow z_1^{k_1}z_2^{k_2}\cdots z_n^{k_n}, \quad z = (z_1,z_2,\ldots,z_n) \in \bt^n, \; \bm k := (k_1,k_2,\ldots,k_n)  \in \bz^n. 
\end{align}
In particular, we can identify every element in $\operatorname{Hom}(\bt^n,S^1)$ with a corresponding vector $\bm k \in \bz^n$ using the notation $T_{\bm k}:\bt^n \rightarrow S^1$ to indicate the map \eqref{def:isomorphism}.
\vs
While any two complex irreducible $\bt^n$-representations $T_{\bm k}$ and $T_{\bm k'}$ are equivalent if and only if $\bm k = \bm k'$, the irreducible $\bt^n$-representations $T_{\bm k}$ and $T_{-\bm k}$ are always equivalent as real representations. It follows that the set of all non-trivial, real irreducible $\bt^n$-representations is in one-to-one correspondence with the set 
\[
\bz_0^n := \{(k_1,k_2,\ldots,k_n) \in \bz^n \setminus \{ \bm 0\}: \text{ if } k_1 = k_2 = \ldots = k_i = 0 \text{ then } k_{i+1} \geq 0 \}.
\]
Having removed possible duplicate indices, we can identify the list of all irreducible real $\bt^n$-representations, using the notation $\mathcal V_{\bm 0} \simeq \br$ to indicate the trivial $\bt^n$-representation and $\mathcal V_{\bm k}$ to indicate the irreducible $\bt^n$-representation corresponding to the homomorphism $T_{\bm k}$ with $\bm k \in \bz_0^n$ (see \eqref{def:isomorphism}), i.e. the irreducible $\bt^n$-representations
\[
\mathcal V_{\bm k} \simeq \mathcal U_{k_1} \otimes \mathcal U_{k_2} \otimes \cdots \otimes \mathcal U_{k_n}, \quad \bm k = (k_1,k_2,\ldots,k_n) \in \bz_0^n, 
\]
where, for each $m > 0$ we denote by $\mathcal U_m \simeq \bc$ the irreducible $S^1$-representation equipped with the {\it $m$-folded} $S^1$-action $e^{i \vartheta} w := e^{i m \vartheta} \cdot w$, 
$\mathcal U_{-m} \simeq \overline{\mathcal U_m}$ its conjugated counterpart with the 
{\it reverse $m$-folded} $S^1$-action $e^{i \vartheta} w := e^{-i m \vartheta} \cdot w$ and $\mathcal U_0 \simeq \br$ the irreducible $S^1$-representation on which $S^1$ acts trivially.
\vs
\noi{\bf Classification of the set $\Phi_1(\bt^n)$:} 
Given a real non-trivial irreducible $\bt^n$-representation $\mathcal V_{\bm k}$, consider the normal subgroup $\Gamma_{\bm k} \leq \bt^n$ given by $\Gamma_{\bm k} := \ker{T_{\bm k}}$, i.e. the subgroup
\[
\Gamma_{\bm k} = \{(z_1,z_2,\ldots,z_n) \in \bt^n: z_1^{k_1}z_2^{k_2}\cdots z_n^{k_n} = 1\}.
\]
Since $T_{\bm k}: \bt^n \rightarrow S^1$ is surjective, one always has $\bt^n/\Gamma_{\bm k} \simeq S^1$ such that $(\Gamma_{\bm k}) \in \Phi_1(\bt^n)$. Moreover, for any non-zero element $x_0 \in \mathcal V_{\bm k} \setminus \{0\}$, notice from
\begin{align*}
  \bt^n_{x_0} = \{ (z_1,z_2,\ldots,z_n) \in \bt^n: z_1^{k_1}z_2^{k_2}\cdots z_n^{k_n} x_0 = x_0 \},
\end{align*}
that the associated isotropy subgroup $\bt^n_{x_0} \leq \bt^n$ coincides with the group $\Gamma_{\bm k}$. Therefore, $\Phi_1(\bt^n, \mathcal V_{\bm k})$ is always the singleton set $\{(\Gamma_{\bm k})\}$ and the isotropy lattice $\Phi_1(\bt^n,V)$ in the more general case that
$V$ is any orthogonal $\bt^n$-representation with the isotypic decomposition
\[
V = \mathcal V_{\bm k_1} \oplus \mathcal V_{\bm k_2} \oplus \cdots \oplus \mathcal V_{\bm k_m},
\]
is such that
\[
\Phi_1(\bt^n,V) \supseteq \{ (\Gamma_{\bm k_1}), (\Gamma_{\bm k_2}), \ldots, (\Gamma_{\bm k_m}) \}.
\]
\vs
\noi{\bf The $\bt^n$-Equivariant Degree.}
Let $V$ be an orthogonal $\bt^n$-representation and consider the induced representation $\br \times V$ where $\bt^n$ acts trivially on $\br$. An open bounded $\bt^n$-invariant set $\Om \subset \br \times V$ together with a $\bt^n$-equivariant map
$f: \br \times V \rightarrow V$ constitute an {\it admissible $\bt^n$-pair in $\br \times V$} if  $f(x) \neq 0$ for all $x \in \partial \Om$, in which case the map $f$ is said to be {\it $\Om$-admissible}. We denote by $\mathcal M_1^{\bt^n}(V)$ the set of all admissible $\bt^n$-pairs in $\br \times V$ and by $\mathcal M_1^{\bt^n}$ the set of all admissible $\bt^n$-pairs defined by taking a union over all orthogonal $\bt^n$-representations, i.e.
\[
\mathcal M_1^{\bt^n} := \bigcup\limits_V \mathcal M_1^{\bt^n}(V).
\]
The $\bt^n$-equivariant degree provides an algebraic count of solution orbits, according to their symmetric properties, to equations of the form
\[
f(x) = 0, \; x \in \Omega,
\]
where $(f, \Omega) \in \mathcal M_1^{\bt^n}$. In fact, using arguments analogous to those used in the definition of the $S^1$-equivariant degree (cf. \cite{AED}, \cite{book-new}) we define the {\it$\bt^n$-equivariant degree} as the unique map associating to every admissible $\bt^n$-pair $(f,\Om)\in \mathcal M_1^{\bt^n}$ an element from the free $\bz$-module $A_1(\bt^n)$, satisfying the four {\it degree axioms} of existence, additivity, homotopy and normalization:
\vs
\begin{theorem} 
\label{thm:GpropDeg} There exists a unique map $\tndeg:\mathcal{M}_1
	^{\bt^n}\to A_1(\bt^n)$, that assigns to every admissible $\bt^n$-pair $(f,\Omega)$ the module element
	\begin{equation}
		\label{eq:G-deg0}\tndeg(f,\Omega)=\sum_{(H) \in \Phi_1(\bt^n)}%
		{n_{H}(H)},
	\end{equation}
	satisfying the following properties:
	\begin{itemize}
		\item[] \textbf{(Existence)} If  $n_{H} \neq0$ for some $(H) \in \Phi_1(\bt^n)$ in \eqref{eq:G-deg0}, then there
		exists $x\in\Omega$ such that $f(x)=0$ and $(\bt^n_{x})\geq(H)$.
		\item[] \textbf{(Additivity)} 
  For any two  disjoint open $\bt^n$-invariant subsets
  $\Omega_{1}$ and $\Omega_{2}$ with
		$f^{-1}(0)\cap\Omega\subset\Omega_{1}\cup\Omega_{2}$, one has
		\begin{align*}	\tndeg(f,\Omega)=\tndeg(f,\Omega_{1})+\tndeg		(f,\Omega_{2}).
	\end{align*}	
		\item[] \textbf{(Homotopy)} For any 
  $\Omega$-admissible $\bt^n$-homotopy, $h:[0,1]\times \br \times V\to V$, one has
		\begin{align*}	\tndeg(h_{t},\Omega)=\mathrm{constant}.
		\end{align*}
\item[] \textbf{(Normalization)} Let $(f, \Omega) \in \mathcal{M}_1^{\bt^n}$ be such that $f$ is regular normal in $\Omega$ (meaning $(\rm i)$ $f$ is smooth, $(\rm ii)$ $f^{-1}(0) \cap \Omega$ consists of orbits $(\bt^n(w_i))$ such that $f$ is transversal to $\{0\}$ along each orbit, and $(\rm iii)$ for every $H \leq \bt^n$ such that $(H) = (\bt^n_{w_i})$ for some $i$, $0$ is a regular value of $f^H|_{\Omega^H}$ - cf. \cite{book-new} for a formal definition of regular normality). Assume further that $f^{-1}(0) \cap \Omega = \bt^n(w_0)$ consists of a single orbit for some $w_0 \in \Omega$. Then,
\begin{align*}
    \tndeg(f, \Omega) =
    \begin{cases}
    \rho_0 (\bt^n_{w_0}) & \text{if } (\bt^n_{w_0}) \in \Phi_1(\bt^n); \\
    0 & \text{otherwise},
    \end{cases}    
\end{align*}
where 
\begin{align}\label{def:orbit_index}
  \rho_0 := \operatorname{sign} \det(Df(w_0)|_{S_{w_0}}),
\end{align}
and $S_{w_0}$ is the positively oriented slice---that is, the orthogonal subspace in $\br \times V$--- to the orbit $\bt^n(w_0)$ at $w_0$.
\end{itemize}
\end{theorem}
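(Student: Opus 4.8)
The plan is to follow the standard two-pronged strategy for constructing a topological degree---first showing that the four axioms force a unique value, then constructing a map that realizes them---adapting the $S^1$-equivariant construction of \cite{AED, book-new} to the abelian torus setting. The essential structural simplification is that, because $\bt^n$ is abelian, every $(H) \in \Phi_1(\bt^n)$ is an honest codimension-one closed subgroup with $\bt^n/H \cong S^1$ and Weyl group $N(H)/H = \bt^n/H$, so that on each fixed-point stratum $\Omega^H$ the restricted map $f^H$ is equivariant under the residual circle action of $\bt^n/H$. This is what lets me import the $S^1$-degree machinery stratum-by-stratum and, in particular, control the slice orientations appearing in \eqref{def:orbit_index}.

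For uniqueness, I would argue that the four axioms leave no freedom. Given any admissible pair $(f,\Omega)$, an equivariant Sard--Smale argument produces a regular normal $\bt^n$-map $g$ that is $\Omega$-admissibly $\bt^n$-homotopic to $f$; by the Homotopy axiom $\tndeg(f,\Omega) = \tndeg(g,\Omega)$. Since $g^{-1}(0)\cap\Omega$ is compact and consists of finitely many isolated orbits, the Additivity axiom decomposes $\tndeg(g,\Omega)$ into a sum over disjoint tubular neighborhoods, one per orbit, and the Normalization axiom pins down each summand to $\rho_0\,(\bt^n_{w_0})$ when $(\bt^n_{w_0}) \in \Phi_1(\bt^n)$ and to $0$ otherwise. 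Hence any two maps satisfying the axioms coincide, and the value is forced to be the orbit sum.

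For existence, I would \emph{define} the degree of a regular normal pair by that same orbit-sum formula $\tndeg(g,\Omega) := \sum_i \rho_i\,(\bt^n_{w_i})$, taken over orbits with $(\bt^n_{w_i})\in\Phi_1(\bt^n)$, and then extend to an arbitrary admissible pair by setting $\tndeg(f,\Omega):=\tndeg(g,\Omega)$ for any regular normal approximation $g$. The crux---and the step I expect to be the main obstacle---is proving this extension is \emph{well-defined}: that two admissibly $\bt^n$-homotopic regular normal maps yield the same element of $A_1(\bt^n)$. This demands an equivariant cobordism argument. After a generic perturbation of a connecting homotopy $h$, the zero set $h^{-1}(0)$ in $[0,1]\times\br\times V$ becomes a compact one-parameter family of orbits---an equivariant $1$-manifold with boundary lying over $\{0,1\}$---and careful bookkeeping of the induced slice orientations along each component shows that boundary contributions cancel in pairs within each fixed orbit type $(H)$. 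Tracking orientations through orbit-type transitions, where the slice representation can jump, is the delicate point, and it is exactly here that the abelian reduction to the $S^1$-degree on each stratum $\Omega^H$ does the heavy lifting by making each coefficient $\operatorname{coeff}^H$ computable from a genuine circle-equivariant count.

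Finally, with well-definedness established, I would verify the four axioms for the constructed map. Normalization and Existence are immediate from the orbit-sum formula, since a single-orbit regular normal pair contributes exactly $\rho_0\,(\bt^n_{w_0})$ (or $0$), and a nonzero coefficient forces a zero of the prescribed orbit type. Additivity follows because a regular normal approximation valid on $\Omega$ restricts to valid approximations on disjoint $\Omega_1,\Omega_2$ whenever $f^{-1}(0)\cap\Omega \subset \Omega_1\cup\Omega_2$, and the orbit sum splits accordingly. Homotopy is precisely the well-definedness statement applied to the two endpoints of the given $\Omega$-admissible homotopy, completing the verification.
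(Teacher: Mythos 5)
Your proposal is correct and follows essentially the same route as the paper, which in fact offers no independent proof of Theorem \ref{thm:GpropDeg}: it simply asserts that the $\bt^n$-equivariant degree is defined ``using arguments analogous to those used in the definition of the $S^1$-equivariant degree'' and defers to \cite{AED, book-new}. Your outline---reducing each stratum to the residual circle action of $\bt^n/H \cong S^1$ for $(H)\in\Phi_1(\bt^n)$, defining the degree on regular normal pairs by the orbit-sum formula, establishing well-definedness by an equivariant cobordism argument, and deriving uniqueness from homotopy, additivity, and normalization---is precisely the standard construction those references carry out.
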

\subsection{Two Computational Formulae for two-parameter $\bt^n$-Equivariant Degree Calculations} \label{sec:appendix_comp_form}
Given a non-trivial irreducible $\bt^n$-representation $\mathcal V_{\bm k} \simeq \bc$, we can always put $V_{\bm k}:= \br \times \mathcal V_{\bm k}$ and identify $\br \times V_{\bm k}$ with the space $\bc \times \mathcal V_{\bm k}$. Let $\mu_{\bm k}: S^1 \rightarrow GL^{G}(\mathcal V_{\bm k})$ be a continuous family of $G$-equivariant invertible linear operators and define the family of $\bt^n$-equivariant complemented linear operators
\[
\mathcal A_{\bm k}: \br^2 \times \mathcal V_{\bm k} \rightarrow \br \times \mathcal V_{\bm k}, \quad \mathcal A_{\bm k}(\lambda)v := \left(1 - |\lambda|, \mu_{\bm k}\left(\frac{\lambda}{|\lambda|}\right) \cdot v\right).
\]
Since the action of $\bt^n$ on $\mathcal V_{\bm k}$ is isometric, the open bounded set
\[
\mathcal D_{\bm k} := \{ (\lambda,v) \in \br^2 \times \mathcal V_{\bm k} : \| v \| < 1, \; \frac{1}{2} < | \lambda| < 2 \},
\]
is $\bt^n$-invariant. Moreover, since the system
\begin{align*}
    \begin{cases}
        \mu_{\bm k}\left(\frac{\lambda}{|\lambda|}\right) \cdot v = 0\\
        1 - | \lambda| = 0,
    \end{cases}
\end{align*}
admits no solutions on $\partial \mathcal D_{\bm k}$, the pair $(\mathcal A_{\bm k}, \mathcal D_{\bm k})$ constitutes an admissible $\bt^n$-pair in $\br \times  V_{\bm k} \simeq \bc \times \mathcal V_{\bm k}$. In this way, we are able to employ the $\bt^n$-equivariant degree for solving two-parameter bifurcation problems.
\vs
The proofs of the following pair analytical formulae are based on identical arguments used to prove analogous results for the $S^1$ degree in \cite{book-new}, \cite{AED}, and for this reason they are omitted.
\begin{lemma} \label{lemm:analytic_formula}
For any admissible pair $( \mathcal A_{\bm k}, \mathcal D_{\bm k}) \in \mathcal M^{\bt^n}_1(\mathcal V_{\bm k})$ constructed in the above manner, one has
    \[
    \tndeg( \mathcal A_{\bm k}, \mathcal D_{\bm k}) = \deg(\det\nolimits_\bc(\mu_{\bm k})) (\Gamma_{\bm k}).
    \]
\end{lemma}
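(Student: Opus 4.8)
The plan is to reduce $(\mathcal A_{\bm k}, \mathcal D_{\bm k})$ to a standard normal form by the homotopy axiom and then read off the single surviving coefficient from the normalization axiom. Since $\mathcal V_{\bm k} \simeq \bc$ is a complex irreducible $\bt^n$-representation, Schur's Lemma identifies $GL^{\bt^n}(\mathcal V_{\bm k})$ with $\bc^* = \bc \setminus \{0\}$ acting by complex scalar multiplication; hence the loop $\mu_{\bm k}: S^1 \to GL^{\bt^n}(\mathcal V_{\bm k})$ is nothing but a loop $S^1 \to \bc^*$, whose free-homotopy class is completely determined by its winding number $d := \deg(\det\nolimits_\bc(\mu_{\bm k}))$. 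First I would use this to replace $\mu_{\bm k}$ by the standard loop $z \mapsto z^d \cdot \mathrm{Id}$ through a homotopy of invertible $\bt^n$-equivariant operators; the induced homotopy of complemented maps $\mathcal A_{\bm k}^t(\lambda)v = (1 - |\lambda|,\, \mu_{\bm k}^t(\lambda/|\lambda|)\, v)$ keeps its zero set confined to $\{|\lambda| = 1,\, v = 0\}$ throughout (each $\mu_{\bm k}^t$ being invertible), so the pair stays $\mathcal D_{\bm k}$-admissible and, by the homotopy axiom, $\tndeg(\mathcal A_{\bm k}, \mathcal D_{\bm k})$ is unchanged. It therefore suffices to compute the degree of the normalized model $\mathcal A(\lambda, v) = (1 - |\lambda|,\, (\lambda/|\lambda|)^d \cdot v)$.

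Next I would pin down which orbit types can contribute. On $\mathcal V_{\bm k} \setminus \{0\}$ every isotropy group equals $\Gamma_{\bm k} = \ker T_{\bm k}$, while the origin carries isotropy $\bt^n$; since $\dim \bt^n/\bt^n = 0$, the orbit type $(\bt^n)$ lies outside $\Phi_1(\bt^n)$ and cannot appear in the degree, whereas $(\Gamma_{\bm k}) \in \Phi_1(\bt^n)$ because $\bt^n/\Gamma_{\bm k} \simeq S^1$. By \eqref{eq:G-deg0} I conclude $\tndeg(\mathcal A_{\bm k}, \mathcal D_{\bm k}) = n_{\Gamma_{\bm k}} (\Gamma_{\bm k})$ for a single integer $n_{\Gamma_{\bm k}}$, so the whole problem collapses to showing $n_{\Gamma_{\bm k}} = d$.

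To evaluate $n_{\Gamma_{\bm k}}$ I would produce a regular normal map $\bt^n$-homotopic to the model whose zero set is a finite union of free $W(\Gamma_{\bm k}) = \bt^n/\Gamma_{\bm k} \simeq S^1$ orbits, and then apply the normalization axiom. The obstruction to applying normalization directly is that the model's zero circle $\{|\lambda| = 1,\, v = 0\}$ is a one-parameter family of $\bt^n$-fixed points rather than finitely many orbits of type $(\Gamma_{\bm k})$; I must therefore perturb within the class of $\bt^n$-equivariant maps—necessarily of the form $(\lambda, v) \mapsto (g(\lambda, |v|^2),\, \psi(\lambda, |v|^2)\, v)$—to push the zeros off the fixed-point stratum into the free stratum $\{v \neq 0\}$. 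A small generic choice of $(g, \psi)$ close to $(1 - |\lambda|,\, (\lambda/|\lambda|)^d)$ makes the three real equations $g = 0$, $\mathrm{Re}\,\psi = 0$, $\mathrm{Im}\,\psi = 0$ in the three real unknowns $(\lambda, |v|^2)$ cut out finitely many solutions with $|v|^2 > 0$, each sweeping out one free orbit; evaluating the slice index \eqref{def:orbit_index} at each orbit and summing produces the algebraic count $n_{\Gamma_{\bm k}}$.

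The hard part will be this last step—verifying that the signed orbit count equals precisely $d$, with correct sign bookkeeping. I expect to handle it exactly as in the $S^1$-degree computations of \cite{book-new, AED}: either by checking the case $d = 1$ directly (a single transverse free orbit of index $+1$) and invoking the multiplicativity of the winding number under $z \mapsto z^d$, or, equivalently, by identifying $n_{\Gamma_{\bm k}}$ with the Brouwer degree of $s \mapsto \det\nolimits_\bc \mu_{\bm k}(s)$ on $S^1$ via the recurrence formula for the $(\Gamma_{\bm k})$-coefficient, there being no higher orbit types in $\Phi_1(\bt^n)$ to subtract. Either route reduces the identity $n_{\Gamma_{\bm k}} = \deg(\det\nolimits_\bc(\mu_{\bm k}))$ to the classical fact that a degree-$d$ self-map of the circle contributes $d$ to the relevant local index.
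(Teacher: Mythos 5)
Your overall strategy---homotope $\mu_{\bm k}$ to the model loop $z \mapsto z^d$, argue that $(\Gamma_{\bm k})$ is the only orbit type that can carry a coefficient, then extract that coefficient via a regular normal approximation and the normalization axiom---is exactly the standard argument that the paper itself invokes (the paper omits the proof, deferring to the analogous $S^1$-degree computations in \cite{book-new,AED}). Your first step is correct and cleanly justified. The genuine gap is in your third step: you claim that a \emph{small} generic equivariant perturbation $(g,\psi)$ of $\left(1-|\lambda|,\ (\lambda/|\lambda|)^d\right)$ produces finitely many zeros with $|v|^2>0$. This is impossible. Since $\left|(\lambda/|\lambda|)^d\right| \equiv 1$ on $\overline{\mathcal D_{\bm k}}$, any $\psi$ that is uniformly close to it is nowhere zero, so \emph{every} zero of a small perturbation still lies in the fixed-point stratum $\{v=0\}$; generically these zeros form a curve $\{g(\lambda,0)=0,\ v=0\}$ of $\bt^n$-fixed points, i.e.\ an infinite family of zero-dimensional orbits, never a finite union of orbits of type $(\Gamma_{\bm k})$. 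Hence the normalization axiom (which requires the zero set to be a single orbit, and assigns $0$ to orbit types outside $\Phi_1(\bt^n)$) cannot be reached this way. Note that this also undermines your second paragraph: because the model's zeros are fixed points, the existence axiom is vacuous there ($(\bt^n)\geq (H)$ for every $H$), so the claim that the degree is supported on $(\Gamma_{\bm k})$ does not follow from \eqref{eq:G-deg0} alone---it rests on the very regular-normal-approximation machinery your third step was meant to supply.

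What is actually required is a \emph{non-small} but still $\mathcal D_{\bm k}$-admissible deformation that moves the zero set off the fixed-point stratum before normalization is invoked. For instance (for $d>0$), deform the pair $(g,\psi)$ from $\left(1-|\lambda|,\ (\lambda/|\lambda|)^d\right)$ to $\left(\epsilon^2-\|v\|^2,\ \lambda^d-(1-\|v\|^2)\right)$: the terminal map has zero set $\{\|v\|=\epsilon,\ \lambda^d=1-\epsilon^2\}$, namely $d$ transverse orbits of type $(\Gamma_{\bm k})$, whose slice indices sum to $d$; the case $d\le 0$ needs a conjugate model. The substance of the lemma is then the verification that no zeros cross $\partial\mathcal D_{\bm k}$ along this homotopy (done carelessly, e.g.\ by linear interpolation with $d<0$, boundary zeros \emph{do} occur), together with the sign bookkeeping in \eqref{def:orbit_index}. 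This is precisely what the cited $S^1$-degree computations carry out. Your fallback suggestion---quoting the ``recurrence formula'' identifying the $(\Gamma_{\bm k})$-coefficient with $\deg(\det\nolimits_\bc \mu_{\bm k})$---is not a proof but a citation of the very result being established; that is, in fairness, what the paper itself does by omitting the proof, but it cannot serve as the missing step in a self-contained argument.
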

\begin{lemma}\label{lemm:splitting_lemma} {\bf (The Splitting Lemma):}
For any two admissible $G$-pairs $(\mathcal A_{\bm k},\mathcal D_{\bm k})$ and $(\mathcal A_{\bm k'},\mathcal D_{\bm k'})$ constructed in the above manner, one has
\[
\tndeg(\mathcal A_{\bm k} \times \mathcal A_{\bm k'},D_{\bm k} \times \mathcal D_{\bm k'}) =  \tndeg(\mathcal A_{\bm k}, D_{\bm k}) + \tndeg(\mathcal A_{\bm k'}, D_{\bm k'}).
\]
\end{lemma}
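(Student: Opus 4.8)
The plan is to realize the product pair as the complemented operator attached to the reducible representation $\mathcal V_{\bm k} \oplus \mathcal V_{\bm k'}$ and to compute its degree by the same regular–normal count that underlies Lemma \ref{lemm:analytic_formula}, the only new feature being that the two irreducible summands deposit their winding numbers onto the two \emph{distinct} generators $(\Gamma_{\bm k})$ and $(\Gamma_{\bm k'})$ of $A_1(\bt^n)$. First I would record that, writing $\widehat V := \br \times (\mathcal V_{\bm k} \oplus \mathcal V_{\bm k'})$ and $\lambda \in \br^2$ for the shared suspension parameter, the product map takes the form
\[
(\mathcal A_{\bm k} \times \mathcal A_{\bm k'})(\lambda, v, v') = \Big( 1 - |\lambda|,\ \mu_{\bm k}\big(\tfrac{\lambda}{|\lambda|}\big)\cdot v,\ \mu_{\bm k'}\big(\tfrac{\lambda}{|\lambda|}\big)\cdot v' \Big),
\]
a $\bt^n$-map $\br \times \widehat V \to \widehat V$ lying in $\mathcal M_1^{\bt^n}(\widehat V)$. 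Its linear part is block diagonal precisely because, by Schur's Lemma, every $\bt^n$-equivariant endomorphism of $\mathcal V_{\bm k}\oplus\mathcal V_{\bm k'}$ respects the isotypic splitting. Admissibility on $\partial(\mathcal D_{\bm k}\times\mathcal D_{\bm k'})$ is then immediate: a boundary zero would force $1 - |\lambda| = 0$, hence $|\lambda| = 1$, at which radius both $\mu_{\bm k}$ and $\mu_{\bm k'}$ are invertible, so $v = v' = 0$ and the point is interior --- a contradiction.

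Next I would pass, via the Homotopy axiom, to a regular normal $\bt^n$-approximation of the product map. The crucial observation is that an equivariant perturbation can be chosen \emph{block by block}: perturb the $\mathcal V_{\bm k}$-equation by a generic equivariant section and the $\mathcal V_{\bm k'}$-equation symmetrically, so that the resulting zero orbits fall into two disjoint families --- those with $v \neq 0,\ v' = 0$, carrying isotropy $(\Gamma_{\bm k})$, and those with $v = 0,\ v' \neq 0$, carrying isotropy $(\Gamma_{\bm k'})$ --- with no zero mixing the two summands. Applying the Additivity axiom across disjoint invariant neighborhoods of these two families, and then the Normalization axiom within each, I obtain
\[
\tndeg(\mathcal A_{\bm k}\times\mathcal A_{\bm k'},\ \mathcal D_{\bm k}\times\mathcal D_{\bm k'}) = \deg\big(\det\nolimits_\bc \mu_{\bm k}\big)\,(\Gamma_{\bm k}) + \deg\big(\det\nolimits_\bc \mu_{\bm k'}\big)\,(\Gamma_{\bm k'}),
\]
where in each block the signed orbit count reproduces the winding number of the corresponding complex determinant --- this is exactly the computation carried out for a single irreducible summand in the proof of Lemma \ref{lemm:analytic_formula}. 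Invoking that lemma for each factor identifies the right-hand side with $\tndeg(\mathcal A_{\bm k},\mathcal D_{\bm k}) + \tndeg(\mathcal A_{\bm k'},\mathcal D_{\bm k'})$, which is the claim.

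The main obstacle is the block-respecting regular normal approximation: I must produce a single $\bt^n$-equivariant, $\mathcal D$-admissible perturbation that is simultaneously transversal, has only $\Phi_1(\bt^n)$-isotropy zeros, and keeps the $(\Gamma_{\bm k})$- and $(\Gamma_{\bm k'})$-families disjoint throughout the homotopy, the admissibility being guaranteed by the invertibility of each $\mu$ on $|\lambda| = 1$. A separate check is needed in the degenerate case $\bm k = \bm k'$, where $\mathcal V_{\bm k} \simeq \mathcal V_{\bm k'}$ and $\Gamma_{\bm k} = \Gamma_{\bm k'}$: here the two orbit-families share a single generator, and one verifies that their signed counts add to $\deg(\det\nolimits_\bc(\mu_{\bm k}\oplus\mu_{\bm k'})) = \deg(\det\nolimits_\bc\mu_{\bm k}) + \deg(\det\nolimits_\bc\mu_{\bm k'})$, collapsing the sum onto the single generator $(\Gamma_{\bm k})$ in agreement with the stated formula.
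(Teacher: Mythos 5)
Note first that the paper does not actually prove this lemma --- it is stated with the remark that the proof is ``based on identical arguments'' to the $S^1$-degree case and deferred to \cite{book-new,AED} --- so your attempt must be measured against that standard argument. Your skeleton has the right shape (Schur block-diagonality, per-block contributions on the generators $(\Gamma_{\bm k})$, $(\Gamma_{\bm k'})$, reduction to Lemma \ref{lemm:analytic_formula}), and you deserve credit for flagging the case $\bm k = \bm k'$: in the paper's application this is in fact the \emph{typical} case, since for fixed $m$ all the components $\mathscr E_{m,n}$, $n \in \bn$, are modeled on the same $\mathcal V_{(m,1)}$. But the central mechanism of your proof fails. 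By construction, $\mu_{\bm k}(z)$ and $\mu_{\bm k'}(z)$ are invertible for \emph{every} $z \in S^1$, so the zero set of the product map is exactly the circle $\{(\lambda,0,0) : |\lambda| = 1\}$, lying entirely in the fixed-point space; moreover, since invertibility is an open condition and equivariance forces the $\mathcal V_{\bm k}$- and $\mathcal V_{\bm k'}$-components of any equivariant map to vanish identically on $\{v = v' = 0\}$, every zero of every $C^0$-small equivariant perturbation still has $v = v' = 0$, with full isotropy $\bt^n \notin \Phi_1(\bt^n)$. Hence the two ``disjoint families'' of zero orbits with isotropy $(\Gamma_{\bm k})$ and $(\Gamma_{\bm k'})$ that your argument hinges on are \emph{empty}: no generic block-wise perturbation creates them. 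The Additivity axiom then has nothing to separate (the zero set is a single connected circle), and the Normalization axiom never applies --- its hypothesis that $f^{-1}(0) \cap \Omega$ be a single orbit fails for a curve of fixed points. Followed literally, your procedure would output $0$, contradicting Lemma \ref{lemm:analytic_formula} whenever $\deg(\det_\bc \mu_{\bm k}) \neq 0$. The degree of these complemented pairs is carried by the \emph{winding} of the invertible family around the fixed-point zero circle, not by transversal twisted orbits of a nearby regular normal map.

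Admissible deformations that do move zeros off the fixed-point set exist (the degree theory is consistent, so they must), but they are large, specially constructed homotopies --- the deformations to ``basic maps'' used in the cited construction of the $S^1$-degree --- not generic approximations, and building them is essentially equivalent to proving Lemma \ref{lemm:analytic_formula} itself. The standard proof of the Splitting Lemma sidesteps this: one factors $\mu_{\bm k} \oplus \mu_{\bm k'} = (\mu_{\bm k} \oplus \id)\cdot(\id \oplus\, \mu_{\bm k'})$, uses a suspension property of $\tndeg$ to identify the degree of the complemented pair for $\mu_{\bm k} \oplus \id$ with $\tndeg(\mathcal A_{\bm k}, \mathcal D_{\bm k})$ (the identity block contributes nothing), and then uses the additive behaviour of the degree under pointwise products of invertible families --- on the level of Lemma \ref{lemm:analytic_formula}, the identity $\deg(\det\nolimits_\bc(\mu_{\bm k}\mu_{\bm k'})) = \deg(\det\nolimits_\bc \mu_{\bm k}) + \deg(\det\nolimits_\bc \mu_{\bm k'})$. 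That suspension-plus-multiplicativity idea (or, equivalently, the explicit basic-map deformation of \cite{book-new,AED}) is the missing ingredient your proposal needs; your final appeal to ``exactly the computation carried out in the proof of Lemma \ref{lemm:analytic_formula}'' tacitly assumes a version of that lemma for the reducible representation $\mathcal V_{\bm k} \oplus \mathcal V_{\bm k'}$, which is precisely the statement being proved.
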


\end{document}